\documentclass[11pt]{amsart}

\usepackage{amsfonts,epsfig,fancyhdr}
\usepackage{newlfont,amsfonts,amssymb,amsmath,amsthm,amsgen,amscd,datetime,dsfont,hyperref,tensor}
\usepackage[small,nohug,heads=littlevee]{diagrams}
\usepackage[normalem]{ulem}
\diagramstyle[labelstyle=\scriptstyle]
\usepackage{multicol,picinpar,enumerate,cleveref,mathabx}
\setlength{\footskip}{1cm}
\setlength{\headsep}{1cm}
\setlength{\textwidth}{15.3cm}
\setlength{\textheight}{21cm}
\setlength{\topmargin}{.0cm}
\setlength{\oddsidemargin}{8mm}
\setlength{\evensidemargin}{8mm}

\setlength{\marginparsep}{2mm}
\setlength{\marginparwidth}{3.2cm}

%---
% Worttrennungen
\hyphenation{ma-ni-fold}
\hyphenation{cor-res-pon-ding}
%---

%\newtheorem{defn}{Definition}[section]
%\newtheorem{prob}[defn]{Problema}

\usepackage{euscript,color}

\DeclareMathOperator{\arcsinh}{arcsinh}

\newcommand\re[1]{(\ref{#1})}

\renewcommand{\L}{\mathbf{L}}

\def\p{\partial}

% ---
% Verweise im PDF
%\usepackage{hyperref}
%\hyperbaseurl{.}
% ---

%Jesse's symbol def'ns%

%\noindent
%%%%%%%%%%%%%%%%
%  THOMAS' SHORTCUTS
%%%%%%%%%%%%%%%%
%\newcommand{\red}{ }

%\newcommand{\R}{\ensuremath{\mathds{R}}}
\newcommand{\C}{\ensuremath{\mathds{C}}}

\renewcommand{\O}{\ensuremath{\mathrm{O}}}

\def\sh{\mathrm{sinh\,}}
\def\ch{\mathrm{cosh\,}}

\newcommand{\hut}{\widehat}

\newcommand{\bmat}{\begin{pmatrix}}
\newcommand{\emat}{\end{pmatrix}}

\newcommand{\U}{\mathrm{U}}

\newcommand{\e}{\mathrm{e}}

\renewcommand{\d}{{\mathrm d}}
\newcommand{\bcase}{\begin{case}}
\newcommand{\ecase}{\end{case}}

\newcommand{\bclaim}{\begin{claim}}
\newcommand{\eclaim}{\end{claim}}

\newcommand{\bstep}{\begin{step}}
\newcommand{\estep}{\end{step}}

\newcommand{\bhlem}{\begin{hlem}}
\newcommand{\ehlem}{\end{hlem}}

\newcommand{\bleer}{\begin{leer}}
\newcommand{\eleer}{\end{leer}}
\newcommand{\bde}{\begin{definition}}
\newcommand{\ede}{\end{definition}}

\newcommand{\bs}{\begin{proposition}}
\newcommand{\es}{\end{proposition}}
\newcommand{\btheo}{\begin{theorem}}
\newcommand{\etheo}{\end{theorem}}
\newcommand{\bfolg}{\begin{corollary}}
\newcommand{\efolg}{\end{corollary}}
\newcommand{\blem}{\begin{lemma}}
\newcommand{\elem}{\end{lemma}}
\newcommand{\bnote}{\begin{note}}
\newcommand{\enote}{\end{note}}
\newcommand{\bprf}{\begin{proof}}
\newcommand{\eprf}{\end{proof}}
\newcommand{\bd}{\begin{displaymath}}
\newcommand{\ed}{\end{displaymath}}
\newcommand{\be}{\begin{eqnarray*}}
\newcommand{\ee}{\end{eqnarray*}}
\newcommand{\eeqa}{\end{eqnarray}}
\newcommand{\beqa}{\begin{eqnarray}}
\newcommand{\bi}{\begin{itemize}}
\newcommand{\ei}{\end{itemize}}
\newcommand{\bnum}{\begin{enumerate}}
\newcommand{\enum}{\end{enumerate}}
\renewcommand{\la}{\langle}
\renewcommand{\ra}{\rangle}

\newcommand{\beq}{\begin{equation}}
\newcommand{\eeq}{\end{equation}}

\newcommand{\rr}{\mathbb{R}}

\newcommand{\M}{M}

\newcommand{\vf}{\varphi}
\newcommand{\earr}{\end{array}\]}
\newcommand{\barr}{\[\begin{array}}
\newcommand{\bvec}{\left(\begin{array}{c}}
\newcommand{\evec}{\end{array}\right)}

\newcommand{\g}{\mathfrak{g}}

\newcommand{\hol}{\mathfrak{hol}}

\newcommand{\Hol}{\mathrm{Hol}}
\newcommand{\+}{\oplus}

\newcommand{\so}{\mathfrak{so}}
\newcommand{\spin}{\mathfrak{spin}}

\renewcommand{\sp}{\mathfrak{sp}}
\newcommand{\su}{\mathfrak{su}}

\renewcommand{\U}{\mathbf{U}}

\newcommand{\del}{\partial}

\newcommand{\bbem}{\begin{bem}}
\newcommand{\ebem}{\end{bem}}
\newcommand{\bbez}{\begin{bez}}
\newcommand{\ebez}{\end{bez}}
\newcommand{\bbsp}{\begin{bsp}}
\newcommand{\ebsp}{\end{bsp}}

%newcommand{\rho}{\rho}

%\newcommand{\Ã}{\s}
%\newcommand{\Ã€}{\rho}

%\newcommand{\bg}{{\cal B}_h(\lag)}

\newcommand{\hm}{\widehat{\M}}

\newcommand{\tg}{\widetilde{g}}

%% CHOL

%\newcommand{\Ro}{\mathsf{P}}

\renewcommand{\gg}{g}
\newcommand{\hg}{\widehat{\gg}}

\newcommand{\bR}{{\mathbb{R}}}

\newcommand{\inter}{\makebox[7pt]{\rule{6pt}{.3pt}\rule{.3pt}{5pt}}\,}

\newcommand{\belabel}[1]{\begin{equation}\label{#1}}

%\swapnumbers

%%% Theorems

% Main results have global numbers

% The rest has sectionwise numbering
 %\swapnumbers

\theoremstyle{definition}
%\swapnumbers
\newtheorem{definition}{Definition}[section]
\newtheorem{bem}[definition]{Remark}
\newtheorem{bez}[definition]{Notation}
\newtheorem{bsp}[definition]{Example}

\newtheorem*{bsp*}{Example}
\newtheorem*{def*}{Definition}
\theoremstyle{plain}
\newtheorem{lemma}[definition]{Lemma}
\newtheorem*{lem*}{Lemma}
\newtheorem{proposition}[definition]{Proposition}
\newtheorem{corollary}[definition]{Corollary}
\newtheorem{theorem}[definition]{Theorem}

\numberwithin{equation}{section}

\setcounter{tocdepth}{1}
%%%%%%%%%%%%%
%%%%%%%%%%%%%
%%%%%%%%%%%%%
%%%%%%%%%%%%%

\begin{document}

\title
%[Completeness of compact  Lorentzian manifolds]
{Semi-Riemannian cones}

\thanks{This work was supported by 
the Niels Henrik Abel Memorial Fund in relation to the  2019  
Abel Symposium ``Geometry, Lie Theory and Applications''
 and by
 the Australian Research
Council (Discovery Program DP190102360).
%the Group of Eight Australia and the German Academic Exchange
%Service through the Go8 Germany Joint Research Co-operation
% Scheme.  
% % grant ``Spinor field equations in global Lorentzian geometry''. 
% The
%... author acknowledges support from 
%the Australian Research
%Council via the grants FT110100429 and DP120104582 and 
% by the German Science Foundation (DFG) under 
%the Research Training Group 1670 ``Mathematics inspired by String Theory''. D.A.~is supported by grant n 18-00496 S of the Czech Science Foundation.
%V.C.~is grateful to the University of Adelaide for its hospitality and support. V.C and T.L. 
%thank  the mathematical research institute MATRIX in Australia where the final version of the paper was completed.
 }

%\begin{abstract}
%\end{abstract}

%\date{\today\ at \xxivtime}

%{Non-existence of Lorentzian conformal holonomy $SO(2,1)_i \subset O(3,2)$}
%\author{Dmitri Alekseevsky}\address{Institute for Information Transmission Problems, B.~Karetnuj per., 19, 127951, Moscow, Russia
%     and University of Hradec Kr\'{a}lov\'{e}, Faculty of Science, Rokitansk\'{e}ho 62, 500 03 Hradec Kr\'{a}lov\'{e}, Czech Republic}
%\email{dalekseevsky@iitp.ru}
%\author{Vicente Cort\'{e}s}
%\address{Department
%   Mathematik, Universit\"at Hamburg, Bundesstra{\ss}e 55, D-20146
%   Hamburg, Germany}\email{vicente.cortes@uni-hamburg.de}
\author{Thomas Leistner}\address{School of Mathematical Sciences, University of Adelaide, SA 5005, Australia}\email{thomas.leistner@adelaide.edu.au}

%\thanks{
%The last author was supported by the Australian Research Council via the fellowship FT110100429 and by a Start-Up-Grant of the Faculty of
%Engineering, Computer and Mathematical Sciences of the University of
%Adelaide.
%}
\subjclass[2010]{Primary 
53C50; Secondary 53C29, 53B30, 53C27}
\keywords{Lorentzian manifolds, pseudo-Riemannian manifolds, metric cones, special holonomy, geodesic completeness, Killing spinors.
}
\begin{abstract}
Due to a result by Gallot a Riemannian cone over a complete Riemannian manifold is either flat or has an irreducible holonomy representation.  This is false in general for indefinite cones but the structures induced on the cone  by holonomy invariant subspaces can be used to study the geometry on the base of the cone. The purpose of this paper is twofold:
first we will give a survey of general results about semi-Riemannian cones with non irreducible holonomy representation and then, as the main result,  we will derive improved versions of these general statements in the case when the cone admits a parallel vector field. We will show that if the base manifold is complete and the fibre of the cone and the parallel vector field have the same causal character, then the cone is flat, and that otherwise, the base manifold admits a certain global warped product structure. We will use these results to give a new proof of the classification results for Riemannian manifolds with imaginary Killing spinors and  Lorentzian manifolds with real Killing spinors which are due to Baum and Bohle.
 \end{abstract}

\maketitle
\setcounter{tocdepth}{1}
\tableofcontents
\section{Introduction}
Given a semi-Riemannian manifold $(M,g)$, the {\em (space-like or time-like) semi-Riemannian cone over $(M,g)$} is the manifold $\hm=\rr_{>0}\times M$ together with the metric
\begin{equation}\label{cone}
\hg_{\epsilon}=\epsilon\, \d r^2+r^2 g,\end{equation}
where $\epsilon =1$ in case of a space-like cone and $\epsilon =-1$ in case of a time-like cone. The original manifold $(M,g)$ is then called the base of the cone.
One reason for considering semi-Riemannian cones is that some systems of PDE on the base correspond to PDE on the cone where they sometimes are easier to study. The key example is the equation for a  {\em  Killing spinor field}, which is an overdetermined system of PDE. A solution to this PDE corresponds to a spinor field on the cone that is parallel for the Levi-Civita connection of the cone metric, which is a closed  system of PDE that can be understood as the prolongation of the original PDE and that is easier to analyse, for example, by  using tools from holonomy theory. Another example is the existence of a Sasaki structure on the base, which correspond to  a K\"{a}hler structure on the cone and hence to a holonomy reduction to the unitary group. Semi-Riemannian cones play also an important role in conformal geometry as conformal ambient metrics for conformal structures containing an Einstein metric. 

As mentioned, the most prominent application is the classification of complete Riemannian manifolds with real Killing spinors by C.~B\"{a}r in \cite{baer93}. He showed that the cone over such manifold  admits a parallel spinor. By a fundamental  theorem of Gallot, the cone is either irreducible or flat. 
With Gallot's result, the holonomy of the cone is one of the irreducible holonomy groups from Berger's list \cite{berger55} that admit invariant spinors \cite{wang89}. This leads to a short list of structures on the cone which correspond to certain structures on the base, all of which had been shown to admit Killing spinors  \cite{BFGK91}. 

In an attempt to apply this method to  Killing spinor (and related) equations on manifolds with {\em indefinite} metrics, in \cite{acgl07,acl19} possible generalisations of Gallot's theorem in the semi-Riemannian context were studied, yielding a comprehensive analysis of the case when the cone admits an invariant subspace under its holonomy representation.   In the first part of this paper, in Section \ref{survsec} we will give a brief survey of these results, including a result from \cite{matveev10}. However instead of providing all the details, we will then focus on the special case when the cone admits a parallel vector field. The focus to this 
this case enables us to show the essential steps in the proofs of the general result without too much technical detail and at the same time give self contained proofs.
More importantly, we will be able  to improve some of the general results  in this special case, in particular in regards to their global character.
%These improvements will then enable us to give new proof for the classification of complete Riemannian manifolds with imaginary Killing spinors and of complete Lorentzian manifolds with real Killing spinors. In both cases the cone is Lorentzian and admits a parallel vector field.
In Section \ref{parsec} we will prove the  main result of the paper:

\begin{theorem}\label{introtheo}
Let $(M,g)$ be a geodesically complete semi-Riemannian manifold and let 
$(\hm,\hg_\epsilon)$ be the (time-like or space-like) cone over $(M,g)$. Assume that  $(\hm,\hg_\epsilon)$ admits a parallel vector field $V$.
\bnum
\item If  $\hg(V,V)=\epsilon$, then the cone is flat and $(M,g)$ is of constant curvature $\epsilon$.
\item If $\hg(V,V)=-\epsilon$, then
 $(M,g)$ is globally isometric to 
\[
(\rr\times N, -\epsilon \d s^2+\cosh^2(s)\,g_N),
\]
where $(N,g_N)$ is a complete semi-Riemannian manifold.
\item 
If $\hg(V,V)=0$, then 
$M$ is a disjoint union $M=M_-\cup M_0 \cup M_+$ with $M_\pm$ open and such that $M_0$ is either empty (in which case one of $M_\pm$ is also empty) or a smooth totally geodesic hypersurface and $(M_\pm,g)$ is globally isometric to
\[
(\rr\times N_{\pm} , -\epsilon \d s^2+\e^{ 2 s} g_{N_{\pm}}),
\]
where $(N_{\pm} ,g_{N_{\pm}})$ are complete  semi-Riemannian manifolds. 
Moreover, $M_0=\emptyset$ if and only if $(M,g)$ is Riemannian or negative definite.
 \enum
\end{theorem}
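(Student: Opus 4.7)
I would first use the cone product structure to write any parallel vector field as $V = a(r,x)\p_r + W(r,x)$ with $W$ tangent to the $M$-factor. The cone connection formulas $\hnab_{\p_r}\p_r = 0$ and $\hnab_{\p_r} X = \tfrac{1}{r}X$ (for $X$ horizontal) reduce $\hnab_{\p_r} V = 0$ to $a = a(x)$ and $W = \tfrac{1}{r}Z(x)$ for some vector field $Z$ on $M$. The remaining equation $\hnab_X V = 0$ (for horizontal $X$), combined with $\hnab_X Y = \nabla_X Y - \epsilon r\, g(X,Y)\p_r$, splits into horizontal and vertical components and yields $Z = \epsilon\,\grad_g a$ together with
\beq\label{hesseq}
\hess_g a \;=\; -\epsilon\, a\, g \qquad \text{on } M.
\eeq
In particular $c := \hg(V,V) = \epsilon a^2 + g(\grad a, \grad a)$ is a constant, and restricting \eqref{hesseq} to a geodesic $\gamma$ of $(M,g)$ gives the ODE $(a\circ\gamma)'' = -\epsilon\, g(\dot\gamma,\dot\gamma)(a\circ\gamma)$; thus along any geodesic $a$ is trigonometric, hyperbolic, or linear according to the causal type of $\dot\gamma$, and geodesic completeness of $(M,g)$ lets us integrate these ODEs on all of $\rr$.

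\textbf{Case analysis.} The three cases are distinguished by the identity $g(\grad a, \grad a) = c - \epsilon a^2$. If $c = \epsilon$, then $|a| \le 1$ and $\grad a$ vanishes exactly on the level sets $\{a = \pm 1\}$; at such a critical point \eqref{hesseq} reads $\hess a = \mp\epsilon g$, so the critical point is nondegenerate with definite signature, and by completeness every point of $M$ is connected by a geodesic to some critical point. Differentiating \eqref{hesseq} and applying the Ricci identity yields
\[
R(X,Y)\grad a \;=\; \epsilon\,\bigl[g(Y,\grad a)X - g(X,\grad a)Y\bigr],
\]
which, combined with the explicit form of $a$ along geodesics from a critical point, implements the semi-Riemannian Obata/Tashiro argument and shows that $(M,g)$ has constant sectional curvature $\epsilon$, equivalently that the cone is flat. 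If $c = -\epsilon$, then $g(\grad a,\grad a) = -\epsilon(1+a^2)$ never vanishes and has definite causal character opposite to $\p_r$; since \eqref{hesseq} implies $\nabla_{\grad a}\grad a = -\epsilon a\,\grad a$, the integral curves of $\grad a$ are pre-geodesics and hence globally defined after reparametrisation by geodesic completeness. Setting $N := a^{-1}(0)$, parametrising by arclength $s$ and normalising so that $a = \sinh(s)$, the flow produces a diffeomorphism $\rr\times N \to M$; \eqref{hesseq} then prescribes the evolution of the induced metric on the level sets and identifies the pulled-back metric with $-\epsilon\,\d s^2 + \cosh^2(s)\,g_N$. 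The case $c = 0$ is analogous: $g(\grad a,\grad a) = -\epsilon a^2$ vanishes precisely on $M_0 = \{a = 0\}$. On each $M_\pm = \{\pm a > 0\}$ the function $b := \log|a|$ has $g(\grad b,\grad b) = -\epsilon$ (constant), and its pre-geodesic flow supplies coordinates in which $a = \pm\e^s$; \eqref{hesseq} then gives $g = -\epsilon\,\d s^2 + \e^{2s}\,g_{N_\pm}$ globally on $M_\pm$ via completeness. Along $M_0$ the vector $\grad a$ is null and tangent (as $a \equiv 0$ there) with $\nabla_X\grad a = -\epsilon a X = 0$, so $M_0$ is a smooth null totally geodesic hypersurface whenever it is non-empty; moreover $\grad a$ can have null norm only where $g$ itself admits null tangent vectors, which gives the dichotomy $M_0 = \ems$ iff $g$ is Riemannian or negative definite.

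\textbf{Main obstacle.} The principal difficulty is the globalisation in cases (ii) and (iii): one must verify that the (reparametrised) flow of $\grad a$, respectively of $\grad\log|a|$, is defined for all $s \in \rr$ and produces a diffeomorphism onto $M$ (respectively onto $M_\pm$). The ODE $(a\circ\gamma)'' = -\epsilon g(\dot\gamma,\dot\gamma)(a\circ\gamma)$ is the crucial link, since it translates geodesic completeness of $g$ into completeness of the $a$-flow and forces the relevant flow lines to reach every value of $a$ in their range. A secondary subtlety is the constant-curvature rigidity in case (i), where the identity for $R(\cdot,\cdot)\grad a$ only constrains $R$ on $2$-planes containing $\grad a$; here one either invokes a semi-Riemannian Tashiro-type theorem directly, or argues on the cone by combining the parallel field $V$ with the homothety field $\xi = r\p_r$ (which satisfies $\hnab_X\xi = X$) to deduce $\hR\equiv 0$.
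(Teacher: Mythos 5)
Your setup is exactly the paper's Lemma \ref{Vlem} (your $a$ is $\epsilon u$ there), and your treatment of cases (2) and (3) follows the same route as the paper's Theorems \ref{-pvf-theo} and \ref{0pvf-theo}: normalise $a$ to $\sinh(s)$ resp.\ $\pm\e^{s}$, show the unit gradient $S=\nabla s$ is a geodesic vector field, and use its flow to split off the warped product. The genuine gap is in case (1). From $c=\epsilon$ you get $g(\grad a,\grad a)=\epsilon(1-a^{2})$, and in indefinite signature this implies neither $|a|\le 1$ nor that $\grad a$ vanishes on the level sets $\{a=\pm 1\}$: the gradient can have either causal character, so $1-a^{2}$ can have either sign (on the flat space-like cone over de Sitter space, $a$ is the restriction of a linear function to the pseudo-sphere, hence unbounded, and $\{a=1\}$ contains non-critical points). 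Worse, the pivotal assertion ``by completeness every point of $M$ is connected by a geodesic to some critical point'' is precisely what fails to be automatic for indefinite metrics: complete indefinite manifolds need not be geodesically connected (de Sitter space again), and proving that the critical set is nevertheless reached from every point is the entire content of the paper's proof of Theorem \ref{Veps}. The paper does this on the cone, Gallot-style: writing $\del_r|_q=\alpha V+W$ and using the explicit cone geodesics of Proposition \ref{geolem}, a case analysis on the causal type $c\epsilon\in\{0,\pm1\}$ of the tangential part $W_0$ of $W$ shows that the geodesic with initial velocity $-rW$ is defined at least up to parameter $1$ and lands in $C=\{p\in\hm\mid \del_r|_p\in\rr\cdot V|_p\}$; Lemma \ref{gallotlem1} then makes the leaves of $V^{\perp}$ through points of $C$ flat, and density of $\hm\setminus C$ gives flatness of the cone. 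Your fallbacks do not close this hole: the curvature identity constrains only planes containing $\grad a$ (as you concede), a semi-Riemannian Tashiro-type theorem of the required generality is precisely what is at stake here (its proof faces the same connectedness obstruction), and ``combining $V$ with the homothety field $r\del_r$'' is not an argument.

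Two further conclusions of the theorem are asserted but never proved in your cases (2) and (3). First, completeness of $(N,g_N)$ and $(N_{\pm},g_{N_{\pm}})$: the paper deduces it from Proposition \ref{completelem}(1), which in case (3) requires the additional check (via Lemma \ref{0geolem}) that the relevant geodesics starting tangent to $N_{\pm}$ remain in $M_{\pm}$ and hence are defined on all of $\rr$; you do not address this at all. Second, your argument for the dichotomy in (3) gives only one implication: if $g$ is definite there is no nonzero null vector, so $M_0=\emptyset$. The converse is not a pointwise signature statement --- an indefinite warped product $-\epsilon\,\d s^{2}+\e^{2s}g_N$ also has $M_0=\emptyset$ --- it genuinely uses completeness: by Proposition \ref{completelem}(3) such a metric is geodesically incomplete whenever it is indefinite (along a suitable null geodesic, $\xi=\e^{\sigma}$ satisfies $\xi''=0$, and a nonconstant affine function cannot stay positive on $\rr$), so $M_0=\emptyset$ together with completeness of $(M,g)$ forces $g$ to be definite.
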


Note that the cases (2) and (3) also include the possibility that $(M,g)$ has constant curvature: in (2)  $g_N$ has constant curvature $\epsilon$ if and only if $g$ also has constant curvature $\epsilon$, whereas in (3),  $g_N$ is flat, if and only if $g$ has constant curvature $\epsilon$ (see \cite[Section 2]{acgl07}).  

The improvements in this theorem over the  of the general result  will allow us to give an alternative proof of the classification of complete Riemannian manifolds with imaginary Killing spinors by Baum \cite{baum89-3} and of Lorentzian manifolds with real Killing spinors \cite{bohle}. In both cases, the parallel spinor on the cone induces a parallel vector field on the Lorentzian cone.  In fact, the results in this paper will be applicable to the classification of Killing spinors whenever the parallel spinor on the cone  induces a parallel vector field. Working out the detail of this is however beyond the scope of this paper.

\subsection*{Acknowledgements}
This paper originated from a talk given at the 
Abel Symposium ``Geometry, Lie Theory and Applications'' in June 2019.
The author would like to thank the organisers for their hospitality and 
the Niels Henrik Abel Memorial Fund for financial support.

\section{Preliminaries}

\subsection{Curvature and geodesics  of semi-Riemannian cones}
Let $\widehat{g}_\epsilon=\epsilon \d r^2 + r^2g$ with $\epsilon=\pm 1$  be the cone metric on
$\widehat{M}=\rr_{>0}\times M$, where $(M,g)$ is a pseudo-Riemannian manifold.
The cone is called space-like if
$\epsilon=1$ and time-like if $\epsilon=-1$.
We denote by $\p_r=\frac{\p}{\p r}$ the radial unit vector field.
The  Levi-Civita connection of the cone $(\widehat{M},\widehat{g}_\epsilon)$ is given by
\beq
%\begin{array}{rcl}
\widehat{\nabla}_{\del_r}\del_r  = 0,\qquad
\widehat{\nabla}_X\p_r  = \frac{1}{r} X,\qquad
%\label{nabxr}
%\widehat{\n}_{\p_r} X &=& \p_r X+\frac{1}{r} X,\\
\widehat{\nabla}_X Y = \nabla_X Y -\epsilon  g(X,Y)\p_r,
%\label{nabxy}
%\end{array}
\label{lem1}
\eeq
for all vector fields $X,Y\in \Gamma(T\widehat{M})$ orthogonal to $\p_r$.
The curvature $\hut{R}$ of the cone is given by the following formulas
including the curvature $R$ of the base metric $g$:
\beq
%\begin{array}{rcl}
\p_r\inter \hut{R}=0,\qquad
\hut{R}(X,Y)Z= R (X,Y)Z - \epsilon \left(  g(Y,Z)X - g(X,Z)Y \right), 
%\text{ or}\\
%\hut{R}(X,Y,Z,U)&=& r^2\left( R (X,Y,Z,U) - \epsilon \left(g(Y,Z)g(X,U) - g(X,Z)g(Y,U)  \right)\right),
%\end{array}
\label{lem2}
\eeq
for $X,Y,Z,\ U\in TM$.
This implies that if $(M,g)$ is a space of constant curvature $\kappa$, i.e.,
\be
 R(X,Y,Z,U) &=& \kappa\left(g(X,U)g(Y,Z) - g(X,Z)g(Y,U)\right),
\ee
then 
the cone has the curvature $r^2\left(\kappa-\epsilon \right) \left(g(X,U)g(Y,Z) - g(X,Z)g(Y,U)\right)$.
In particular, if $\kappa=\epsilon$, then the cone is flat,
as it is the case for the $\epsilon=1$ cone over the standard sphere of radius
$1$ or the $\epsilon=-1$ cone over the hyperbolic space.
%
% We denote by $(\widehat{M}=\rr^+\times M, \widehat{g}=dr^2+r^2g)$, the
%space-like cone over $(M,g)$ and by $(\widehat{M}^-=\rr^+\times M,
%\widehat{g}^-=-dr^2+r^2g)$ the time-like cone.
%Notice that
%the metric $\widehat{g}^-$ of the time-like cone
%$\widehat{M}^-$ over $(M,g)$ is obtained
%by multiplying the metric $dr^2-r^2g$ of the
%space-like cone  over $(M,-g)$ by $-1$. Thus it is sufficient
%to consider only  space-like cones.

\bigskip

Let 
$
\widehat{\gamma}=(\rho,\gamma): I\to \hm=\rr_{>0}\times M$ be a geodesic of $(\hm,\hg_\epsilon)$
starting at $\hat p$ and with $\widehat{\gamma}^\prime(0)= a\del_r+ X $. 
The geodesic equations are easily checked to be
\begin{equation}
0= \rho^{\prime\prime}(t) -\epsilon r(t) g\left(\gamma^\prime(t), \gamma^\prime(t)\right) \label{scalar},\qquad
0=2\ \rho^{\prime}(t)\gamma^\prime(t) + \rho(t)\nabla_{\gamma^\prime(t)} \gamma^\prime(t).
\end{equation}
Let $\gamma$ be a
reparametrisation of a geodesic $\beta$ of $(M,g)$,
\begin{equation}\label{geopara}
\gamma(t) = \beta(f(t)),\quad\text{
 with
$\beta(0)=p\, \text{ and }\, \beta^\prime(0)=X$,}\end{equation}
implying the initial conditions 
$f(0)=0$  and $ f^{\prime}(0)=1$ for $f$.

Now let $g(X,X)=c L^2$ with $c\in\{0,\pm 1\}$ and $L>0$.
%As $\gamma^\prime(t)= f^{\prime}(t)\cdot \beta^\prime(f(t))$, $g\left(\gamma^\prime(t),
%\gamma^\prime(t)\right)= f^{\prime}(t)^2g\left(\beta^\prime(f(t)),
%  \beta^\prime(f(t))\right)$ and
%$\n_{\gamma^\prime(t)} \gamma^\prime(t) =f^{\prime\prime}(t)\beta^\prime(f(t))$, we 
Hence,  from (\ref{scalar}) we get
\begin{equation}
0= \rho^{\prime\prime}(t) -c  \epsilon \rho(t) f^{\prime}(t)^2L^2, \label{scalar1}\qquad
0=2\ \rho^{\prime}(t)f^{\prime}(t) + \rho(t)f^{\prime\prime}(t)
\end{equation}
with initial conditions
\[
\rho(0)=r,\quad  f(0)=0,\quad
\rho^{\prime}(0)=a,\quad f^{\prime}(0)=1.
\]
If the initial speed $X$ satisfies $c L^2=g(X,X)=0$, i.e., if it is zero or light-like, 
then the equations become
\[
0= \rho^{\prime\prime}(t),\qquad
0=2\ \rho f^{\prime}(t) + (\rho t+ \rho) f^{\prime\prime}(t),
\]
i.e., with solutions  
\begin{equation}\label{geosol0}
\rho(t)=a t+ r,\qquad 
f(t) = \frac{r t}{a t+r}.
\end{equation} 
This implies that $f$ and thus $\hat\gamma$ is defined for $t\in[0,-\frac{r}{a})$ if $a<0$, and for $t\ge 0$ otherwise.

If $cL^2\not=0$, 
the solutions to  equations (\ref{scalar1})  are then given by
\begin{equation}\label{geosol}
\begin{array}{rcl}
\rho (t)&=& \sqrt{(a t+r)^2+c \epsilon L^2r^2t^2},\\[2mm]
f (t)&=&\left\{ \begin{array}{ll}
\frac{1}{L} \mathrm{artan} \left(\frac{L r t}{a t +r}\right),&\text{ if  $c\epsilon =1$,}
\\
\frac{1}{L} \mathrm{artanh} \left(\frac{L r t}{a t +r}\right),&\text{ if $c\epsilon =-1$,}
\end{array}\right.
\end{array}
\end{equation}
This gives us the maximal domain of the cone geodesics under the assumption that $(M,g)$ is complete:
in case of $c\epsilon=1$, in particular if the cone is Riemannian, all  geodesics  are defined on $\rr$ if $a\ge 0$ and on  $t\in [0,-\frac{r}{a})$ if $a<0$.
Otherwise, if 
the functions $\rho$ and $f$ are defined on an interval $[0,T)$, where
$T$ is the first positive zero of the polynomial 
\[
\left(\tfrac{Lrt}{at+r}-1\right) \left(\tfrac{Lrt}{at+r}+1\right)(at+r)^2=
L^2r^2t^2-(at+r)^2
=
((Lr-a)t-r)((Lr +a)t + r),\] 
or $T=\infty$ if the polynomial has no positive zero.
More explicitly, $T=\frac{r}{Lr-a}$ if  $a  < Lr$ and $T=\infty$  if  $a\ge Lr$. We summarise this:

\begin{proposition}\label{geolem}
Let  $(M,g)$ be  a complete semi-Riemannian manifold  and  
 $(\hm,\hg_\epsilon)$  be the  cone.
Let $\hat{p}=(r,p)\in 
\hm$ and $\widehat X=a\del_r|_{\hat p} + X \in T_{\hat p}\hm$ with $g(X,X)=cL^2$ with $c\in\{0,\pm 1\}$ and $L> 0$.
Then there is a geodesic  $\widehat{\gamma}:[0,T)\to \hm$ of $(\hm,\hg)$ starting at $\hat p$  with $\widehat{\gamma}^\prime(0)= \widehat X $ and where 
 \begin{equation}\label{geod}
 T=\left\{
 \begin{array}{rl}
 \infty,&\text{  if $c\epsilon\in \{0,1\}$ and $a\ge 0$, or if $c\epsilon =-1$ and  $a\ge Lr$,}\\[2mm]
 -\frac{r}{a}, &\text{  if $c\epsilon \in \{0,1\}$ and $a< 0$,}\\[2mm]
   \frac{r}{Lr-a},&\text{   if $c\epsilon =-1$ and  $a<Lr$.}\\[2mm]
 \end{array}\right.\end{equation} This geodesic is given by (\ref{geopara}) together with  (\ref{geosol0}) or (\ref{geosol}).
\end{proposition}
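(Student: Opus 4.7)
The plan is essentially a direct verification, since the groundwork has already been laid in the paragraphs preceding the statement; the proposition packages those computations under the completeness hypothesis on $(M,g)$. The first step is to note that the second geodesic equation in \re{scalar}, namely $\rho\,\nabla_{\gamma'}\gamma' = -2\rho'\gamma'$, forces $\nabla_{\gamma'}\gamma'$ to be proportional to $\gamma'$, so the base projection $\gamma$ of any cone geodesic is a pregeodesic of $(M,g)$. Completeness of $(M,g)$ then allows us to write $\gamma(t)=\beta(f(t))$ as in \re{geopara}, where $\beta$ is the affinely parametrised geodesic with $\beta(0)=p$, $\beta'(0)=X$, defined on all of $\rr$, and $f$ satisfies $f(0)=0$, $f'(0)=1$.

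Next, I would insert this ansatz into both equations of \re{scalar}. Since $g(\gamma'(t),\gamma'(t))=cL^2 f'(t)^2$, the system reduces to \re{scalar1}. The second equation there, $2\rho'f'+\rho f''=0$, is exact and integrates to $\rho^2 f'=r^2$ using the initial data. Substituting $f'=r^2/\rho^2$ into the first equation of \re{scalar1} gives an autonomous second-order ODE for $\rho$; in the case $cL^2=0$ it is simply $\rho''=0$, yielding $\rho(t)=at+r$ and then $f(t)=rt/(at+r)$ as in \re{geosol0}, while in the case $cL^2\neq 0$ one differentiates $\rho^2=(at+r)^2+c\epsilon L^2 r^2 t^2$ and checks directly that it solves $\rho''=c\epsilon\rho (f')^2 L^2$. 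Recovering $f$ from $f'=r^2/\rho^2=r^2/((at+r)^2+c\epsilon L^2 r^2 t^2)$ by elementary integration produces the $\arctan$ or $\mathrm{artanh}$ primitives in \re{geosol}. The initial condition $f'(0)=1$ is immediate, and $f(0)=0$ follows from evaluating the primitive at $t=0$.

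It remains to determine the maximal interval $[0,T)$. The geodesic exists as long as $\rho(t)>0$ (so $\widehat\gamma$ stays in $\hm=\rr_{>0}\times M$) and $f(t)$ is defined; because $(M,g)$ is complete, $\beta\circ f$ is automatically defined wherever $f$ is. For $c=0$ the radial factor $\rho(t)=at+r$ vanishes at $t=-r/a$ iff $a<0$, giving the first two entries of \re{geod}. For $c\epsilon=1$ the quantity $\rho^2=(at+r)^2+L^2r^2t^2$ is strictly positive for $t>0$ and the argument of $\arctan$ is well defined except at the zero of $at+r$; this yields the same cases as $c=0$ (with the convention that $\arctan$ extends continuously through its jump). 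For $c\epsilon=-1$ the factorisation $L^2r^2t^2-(at+r)^2=((Lr-a)t-r)((Lr+a)t+r)$ already displayed in the excerpt shows that the first positive zero is $r/(Lr-a)$ when $a<Lr$ (the second factor has a non-positive zero because $L>0$ and $a\geq -Lr$ in the only relevant subcase, so $r/(Lr-a)<r/(-Lr-a)$ when both exist), and that no positive zero exists when $a\geq Lr$, giving $T=\infty$.

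The only genuinely non-mechanical step is the last case analysis for $c\epsilon=-1$: one must confirm that $r/(Lr-a)$ is simultaneously the first value where $\rho$ vanishes and the first value where the argument $Lrt/(at+r)$ of $\mathrm{artanh}$ reaches $\pm 1$, so that $T$ is governed by a single breakdown mechanism and the geodesic does indeed extend up to (but not beyond) $t=T$. Everything else is a bookkeeping of the explicit formulas together with one appeal to completeness of the base.
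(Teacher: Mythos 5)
Your proof is correct and is essentially the paper's own argument: the paper proves Proposition \ref{geolem} by exactly the computation that precedes it (the ansatz (\ref{geopara}), the reduction to (\ref{scalar1}), the explicit solutions (\ref{geosol0}) and (\ref{geosol}), and the zero analysis of $((Lr-a)t-r)((Lr+a)t+r)$); your explicit use of the first integral $\rho^2 f'=r^2$ only makes transparent what the paper leaves implicit, and your case discussion for $c\epsilon=-1$ is the same as the paper's.

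One point in your third paragraph is internally inconsistent and worth straightening out. For $c\epsilon=1$ and $a<0$, the time $T=-\frac{r}{a}$ is \emph{not} a genuine breakdown of the geodesic, only of the principal branch of the formula for $f$: the quadratic $\rho^2=(at+r)^2+L^2r^2t^2$ has discriminant $-4L^2r^4<0$, hence never vanishes, so $f'=r^2/\rho^2$ is smooth and positive for all $t$, and completeness of $(M,g)$ allows $\beta\circ f$ to be continued; the geodesic therefore extends to all of $[0,\infty)$, exactly as your parenthetical about continuing $\arctan$ through its jump suggests. So this case is \emph{not} ``the same as $c=0$'': genuine inextendibility (the geodesic running into the apex, $\rho\to 0$) occurs only for $c=0$, $a<0$ and for $c\epsilon=-1$, $a<Lr$. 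This does not invalidate your proof of the proposition as stated, since the statement only asserts the existence of a geodesic on $[0,T)$ given by the displayed formulas (and the later applications, in the proofs of Theorems \ref{gallottheo} and \ref{Veps}, only use $T$ as a lower bound, namely $T>1$); but your phrase about the maximal interval, like the paper's phrase ``maximal domain'' in the text preceding the proposition, should be read in this weaker sense.
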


\subsection{Completeness of certain warped products}

In this section we are going to study the completeness of warped products of the form 
\[(M=\rr\times N ,g=-\epsilon \d s^2+f^2(s) g_N), \]
where $(N,g_N)$ is a semi-Riemannian manifold,  $f$ is a positive function on $N$ and $\epsilon =\pm 1$. We will need these results in Section \ref{parsec}. 
The Levi-Civita connection of such metrics is given by
\begin{equation}\label{warplc}
\begin{array}{rcl}
\nabla_{\del_s}\del_s&=&0,\\[2mm]
{\nabla}_X \p_s &=& \frac{f^\prime(s)}{f(s)} X,\\[2mm]
{\nabla}_X Y &=& \nabla^N_X Y +\epsilon f^\prime(s)f(s)  g_N(X,Y)\p_s,
%\label{nabxy}
\end{array}
\end{equation}

\begin{proposition}\label{completelem}
Let $f:\rr\to \rr_{>0}$ be a smooth function and $(N,g_N)$ be a semi-Riemannian manifold and define
$(M=\rr\times N,g=-\epsilon \d s^2+f^2(s) g_N) $. 
\bnum
\item If all geodesics of $(M,g)$ with initial velocity  tangent to $N$ are defined on $\rr$,  then $(N,g_N)$ is complete. In particular, if $(M,g)$ is complete, 
then $(N,g_N)$ is complete.
\item If $f=\cosh$, then $(M,g)$ is complete if $(N,g_N)$ is  complete.
\item 
If $f(s)=\e^{s}$, then $(M,g)$ is complete if and only if $(N,g_N)$ is  complete and $(M,g)$ is definite, i.e., if $-\epsilon g_N$ is a complete Riemannian metric.
\enum
\end{proposition}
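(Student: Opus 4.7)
The plan is to analyse geodesics in the warped product directly from the Levi-Civita formulas in \eqref{warplc}. For a smooth curve $\hat\gamma(t)=(\alpha(t),\sigma(t))$ in $M=\rr\times N$ with velocity $\alpha'(t)\partial_s+\sigma'(t)$, the geodesic equations decompose as
\[
\alpha''=-\epsilon f(\alpha)f'(\alpha)\, g_N(\sigma',\sigma'),\qquad
\nabla^N_{\sigma'}\sigma'=-\frac{2\alpha'(t)f'(\alpha)}{f(\alpha)}\,\sigma'.
\]
The second equation identifies $\sigma$ as a pregeodesic of $(N,g_N)$, so $\sigma(t)=\beta(u(t))$ for a $g_N$-geodesic $\beta$ with $\beta(0)=\sigma(0)$ and $\beta'(0)=\sigma'(0)$, where $u(0)=0$ and $u'(0)=1$. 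Setting $s_0:=\alpha(0)$, a direct computation shows that $K:=f(\alpha)^4 g_N(\sigma',\sigma')$ and the speed $E:=\hg(\hat\gamma',\hat\gamma')$ are both conserved along $\hat\gamma$, yielding the first integral $\alpha'(t)^2=\epsilon(K/f(\alpha(t))^2-E)$ together with the reparameterisation formula $u'(t)=f(s_0)^2/f(\alpha(t))^2>0$, so $u$ is monotone.

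For part~(1), given a $g_N$-geodesic $\beta$ with $\beta(0)=p$, $\beta'(0)=X$ and maximal interval $(t_-,t_+)$, I would consider the $M$-geodesics $\hat\gamma_{s_0}$ starting at $(s_0,p)$ with velocity $(0,X)$ for varying $s_0\in\rr$. By hypothesis each $\hat\gamma_{s_0}$ is defined on $\rr$, so $u_{s_0}(\rr)\subseteq(t_-,t_+)$. I would then argue via the first integral that $\int_0^\infty u'_{s_0}(\tau)\,d\tau$ exceeds any prescribed bound for a suitable choice of $s_0$: when $g_N(X,X)$ is zero or of sign $\epsilon$, $\alpha_{s_0}$ is forced into $\{f\le f(s_0)\}$ so $u'_{s_0}\geq 1$; in the remaining case (sign opposite to $\epsilon$), an asymptotic analysis as $s_0\to\pm\infty$ shows $u_{s_0}(\infty)\to\infty$. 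This forces $t_+=+\infty$, and similarly $t_-=-\infty$. For part~(2), with $f=\cosh\geq 1$, the first integral gives $\alpha'^2\leq|K|+|E|$ uniformly in $\alpha$, so $\alpha$ cannot blow up in finite time, while $u'\leq\cosh^2(s_0)$ is bounded; completeness of $(N,g_N)$ then implies $\beta$, and hence $\sigma$ and $\hat\gamma$, are defined on $\rr$.

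For part~(3), the ``if'' direction is analogous: when $-\epsilon g_N$ is positive definite, the signs in the first integral $\alpha'^2=\epsilon K e^{-2\alpha}-\epsilon E$ force $\alpha$ to be bounded below with at most linear growth above, and $u$ remains bounded, so $\hat\gamma$ is defined on $\rr$. For the ``only if'' direction, if $X\in T_pN$ satisfies $c:=g_N(X,X)$ with $\epsilon c>0$, I would take the $\hg$-null initial velocity $a\partial_s+X$ with $a=f(s_0)\sqrt{\epsilon c}$, so that $E=0$, and integrating $\alpha'^2=\epsilon K e^{-2\alpha}$ yields $e^{\alpha(t)}=f(s_0)+f(s_0)^2\sqrt{\epsilon c}\,t$, whose domain is the half-line $t>-1/(f(s_0)\sqrt{\epsilon c})$, showing that $\hat\gamma$ is incomplete. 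The main obstacle throughout is correctly tracking sign conventions in the first integral and identifying, for each case, which initial data produces well-behaved $\alpha$ and $u$ versus a finite-time singularity; in particular the asymptotic analysis of $u_{s_0}$ in part~(1) and the matching of signs in part~(3) are the delicate points.
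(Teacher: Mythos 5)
The genuine gap is in part~(1), and it sits exactly at the step you flag but do not carry out: the claim that, for directions with $\epsilon g_N(X,X)<0$, ``an asymptotic analysis as $s_0\to\pm\infty$ shows $u_{s_0}(\infty)\to\infty$''. First, this is not quite the right assertion: what you need is $\sup_{s_0}u_{s_0}(\infty)=\infty$, and in general only a one-sided $\limsup$ is available. Second, proving it is the entire content of (1) in this case, and no argument is given. Concretely, one has to split into cases: if $f'(s_0)=0$ for some $s_0$, uniqueness of geodesics forces $\alpha_{s_0}\equiv s_0$ and $u_{s_0}(t)=t$, so that single geodesic suffices; if $f$ is strictly monotone (say increasing) and bounded above, then $u_{s_0}'\ge f(s_0)^2/(\sup f)^2>0$ and again any $s_0$ works; but if $f$ is strictly increasing and unbounded --- which includes $f=\e^s$, the very case needed for Theorem~\ref{0pvf-theo} --- then each individual $u_{s_0}$ genuinely has bounded range (for $f=\e^s$ this is the picture of a hyperbolic geodesic tangent to a horosphere: its projection to the horosphere has finite length), and one must establish an estimate such as
\[
u_{s_0}(\infty)\;=\;\frac{f(s_0)}{\sqrt{c}}\int_{s_0}^{\infty}\frac{\d\sigma}{f(\sigma)\sqrt{f(\sigma)^2-f(s_0)^2}}\;\ge\;\frac{f(s_0)}{\sqrt{c}}\int_{s_0}^{\infty}\frac{\d\sigma}{f(\sigma)^2},
\qquad c=-\epsilon g_N(X,X)>0,
\]
together with the fact that $f(s_0)\int_{s_0}^{\infty}f^{-2}\,\d\sigma$ cannot remain bounded as $s_0\to-\infty$: setting $G(s)=\int_s^\infty f^{-2}\,\d\sigma$ (which may be assumed finite, else there is nothing to prove), a uniform bound $f\,G\le C$ would give $(1/G)'=f^{-2}G^{-2}\ge C^{-2}$, and integrating this from $s$ to a fixed $s_*$ contradicts $G(s_*)>0$ as $s\to-\infty$. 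Without an argument of this kind, part~(1) --- which is stated for an \emph{arbitrary} smooth positive $f$ --- is unproved. It is worth noting that the paper's own proof of (1) takes a different and much shorter route: it reparametrises the projection of a \emph{single} geodesic by $g_N$-arc length and concludes completeness. Your analysis shows precisely why this is delicate (the total $g_N$-length of that projection can be finite), so your varying-$s_0$ device is the right way to argue; it just has to be completed.

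Parts~(2) and~(3) of your proposal are correct and close to the paper's proof. Your conserved quantities $K=f^4g_N(\sigma',\sigma')$, $E=g(\hat\gamma',\hat\gamma')$ and the relation $u'=f(s_0)^2/f(\alpha)^2$ are what the paper extracts from the geodesic equations (\ref{geoeqs}); your ``only if'' argument in (3) (a null initial velocity with nonzero $\del_s$-component, $\e^{\alpha}$ affine, finite-time escape to $s=-\infty$) is exactly the paper's substitution $\xi=\e^{\sigma}$, and you are in fact more careful than the paper in insisting that the chosen null direction have $a\ne 0$, without which no contradiction arises. For the ``if'' directions, the paper linearises the $s$-equation via $\xi=\sinh\sigma$ in (2) and runs a Hopf--Rinow length estimate in (3), whereas you bound $\alpha'$ and $u'$ on finite intervals and invoke the standard fact that a geodesic confined, together with its velocity, to a compact set on a finite maximal interval extends; both routes are sound, and yours has the merit of treating (2) and (3) uniformly.
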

\bprf
(1) Let $(\sigma,\gamma):\rr\to  $ be a geodesic of $(M,g)$ with $\sigma^\prime(0)=0$. Then the geodesic equations are 
\begin{equation}
\label{geoeqs}
\sigma^{\prime\prime} +\epsilon f^\prime(\sigma) f (\sigma) g(\gamma^\prime, \gamma^\prime)=0,\qquad
\nabla^N_{\gamma^\prime}\gamma^\prime+2\frac{f^\prime(\sigma)}{f(\sigma)}\sigma^\prime \gamma^\prime=0,\end{equation}
in particular, $\gamma$ is a pre-geodesic for $g_N$.
The first equation shows that, if $\gamma^\prime(t_0)=0$ for some $t_0$, then $\sigma(t)= at +b$ and $\gamma(t)\equiv \gamma(t_0)$ constant. Hence, if $\gamma^\prime(0)\not=0$, then $\gamma^\prime(t)\not=0$ for all $t$, and so  we can parametrise $\gamma$ by arc-length. The second geodesic equation shows that the reparametrised curve is a geodesic for $g_N$. Hence, $(N,g_N)$ is complete.

(2) 
Assume that  $(M,g)$ is incomplete. Hence there is a maximal geodesic $(\sigma,\gamma):(a,b) \to  M $ with  $b\in \rr$. Then the first geodesic equation in (\ref{geoeqs}) and the equation that the geodesic is of constant length,
\[
-\epsilon (\sigma^{\prime})^2 + f^2(\sigma) g(\gamma^\prime,\gamma^\prime)=c,\]
for a  constant $c$, imply that
\[\cosh(\sigma) \sigma^{\prime\prime} +\sinh(\sigma) (\sigma^\prime)^2 +\epsilon c \sinh(\sigma)=0.
\]
Then, with substituting $\xi=\sinh(\sigma)$, this equation becomes
\[\xi^{\prime\prime} +\epsilon c \xi=0.\]
This is a linear ODE for $\xi$ and hence we can extend $\xi$ and also $\sigma$ beyond $b$ and in fact to $\rr$.   Moreover, 
the second geodesic equation in (\ref{geoeqs})  implies that $\gamma=\beta\circ \tau$, where $\beta $ is a geodesic equation and $\sigma $ and $\tau$ satisfy the equations
\[
\tau^{\prime\prime} +2\,\sigma^\prime \tau^\prime\tanh(\sigma) =0.
\]
With $\sigma:\rr\to \rr$, this is a linear ODE for $\tau$ and hence can be extended beyond $b$. This yields a contradiction to the incompleteness of $(M,g)$.

(3)
Assume that $g$ is complete but indefinite. With $g$  indefinite we can consider a light-like geodesic $(\sigma,\gamma)$, i.e., with 
\[0=-\epsilon(\sigma^\prime)^2 + \e^{2\sigma} g_N(\gamma^\prime,\gamma^\prime).\]
Moreover, from the first geodesic equation
%\[\sigma^{\prime\prime} =-\epsilon \e^{2\sigma} g_N(\gamma^\prime,\gamma^\prime),\]
 we obtain 
\[0=\left( (\sigma^\prime)^2 + \sigma^{\prime\prime}\right) = \xi^{-1} \xi^{\prime\prime} ,\]
where we substitute $\xi=\e^{\sigma}>0$. This however yields the equation $\xi^{\prime\prime}=0$, so $\xi$ is affine and, since $(M,g)$ is complete, defined on $\rr$. This contradicts $\xi=\e^\sigma>0$, so $(M,g)$ cannot have light-like geodesics and hence $g$ is definite.

Conversely, assume that $(N,g_N)$ is a complete Riemannian manifold. If $(M,g)$ is not complete, there is a maximal geodesic $\gamma=(\sigma,\beta):[0,b)\to M$ that leaves every compact set in $M$. For such a geodesic we have
\[1=(\sigma^\prime)^2+\e^{2\sigma} g_N(\beta^\prime,\beta^\prime).\]
Hence, with $g_N$ Riemannian, we have 
$0\le (\sigma^\prime)^2\le 1$ and hence that $\sigma$ is bounded on $[0,b)$. This implies that $\sigma$ remains in a compact set, which implies that $\beta$ leaves every compact set in $N$. It also implies that $\e^\sigma$ is bounded away from zero and so  $g_N(\beta^\prime, \beta^\prime)$ is bounded on $[0,b)$ say by $c^2$. 
Then we have that $\beta(t)$ is contained in the geodesic ball around $\beta(0)$ of radius $bc$ since
\[\mathrm{dist}_{g_N} (\beta(0),\beta(t)) \le \mathrm{length}_{g_N} (\beta|_{[0,t]} ) \le bc.
\]
Since $(N,g_N)$ is complete, its geodesic balls are compact, which gives a contradiction. Hence $(M,g)$ is complete.
\eprf

\section{Survey of general results}
\label{survsec}
\subsection {Holonomy groups and  Gallot's Theorem} Let $(M,g)$ be a semi-Riemannian connected manifold. The {\em holonomy group} $\Hol_p(M,g)$ of $(M,g)$ at $p\in M$ is defined as the group of parallel transports, with respect to the Levi-Civita connection of $g$,  along piecewise smooth loops that are closed at $p$. Since the Levi-Civita connection preserves the metric, the holonomy group is a subgroup of the orthogonal groupp $\O(T_pM, g|_p)$ acting on $T_pM$. By fixing a basis of $T_pM$, it can be identified with a subgroup of $\O(r,s)$,  where $(r,s)$ is the signature of $(M,g)$. The holonomy groups at different points in $M$ are conjugated within $\O(r,s)$. Hence the holonomy group as a subgroup in $\O(r,s)$ is well defined up to conjugation and we refer to this as the holonomy group $\Hol(M,g)$.  If $G\subset \O(r,s)$ is a subgroup  and $\Hol(r,s)\subset G$ we say that the holonomy 
reduces to $G$.

The holonomy group is a Lie group. Its connected component is given by parallel transport along {\em contractible} loops. 
Its Lie algebra is denoted by $\hol_p(M,g)$, the {\em holonomy algebra}. 
One can show that the holonomy algebra contains all curvature endomorphisms $R|_p(X,Y)$ at $p$, with $X,Y\in T_pM$  and all derivatives of curvature endomorphisms. Moreover, the Ambrose-Singer holonomy Theorem states that the holonomy algebra at $p$ is spanned as a vector space by the following linear maps, 
\[P_\gamma^{-1}\circ R|_{q}(X,Y ) \circ P_\gamma,
\] where $q\in M$,  $\gamma$ is a path from $p$ to $q$, $P_\gamma$ the parallel transport along $\gamma$ and $X,Y\in T_qM$.

The importance of the holonomy group arises from the well-known holonomy principles. First, parallel sections (with respect to the Levi-Civita connection of $g$) of $TM$ or of any tensor bundle over $M$ are in one-to-one correspondence with vectors (or tensors) that are fixed under the holonomy representation. For example, the existence of parallel vector field reduces the holonomy to a the stabiliser in $\O(r,s)$ of a vector in $\rr^{r,s}$. Another example is the existence of a parallel complex structure, which reduces the holonomy to the unitary group $\U(r/2,s/2)$. 
The other principle is that subspaces in $T_pM$, or in $\rr^{r,s}$, that are invariant under the holonomy group are in one-to-one correspondence with vector distributions that are invariant under parallel transport, or for short, a {\em parallel distribution}. The parallel  distribution $\mathbb{V}\subset TM$ is obtained from an holonomy invariant subspace $E\subset T_pM$ by parallel transport: the fibre  $\mathbb{V}|_q$ is defined as by the parallel transport of $E\subset T_pM$ by any curve from $p$ to $q$.
Because of the holonomy invariance of $E$, this is  a well defined procedure and $\mathbb{V}|_q$ does not depend on the chosen loop. 
A parallel distribution $\mathbb{V}$ is involutive and  defines a foliation of $M$ into totally geodesic leaves of $\mathbb{V}$. 

The holonomy group acts {\em irreducibly} if  it does not admit any invariant subspace. In this case we also say that $(M,g)$ is irreducible.   If $\Hol(M,g)$ does admit an invariant subspace $E$,  since  $\Hol(M,g)\subset \O(r,s)$, the orthogonal space $E^\perp$ is also  invariant under $\Hol(M,g)$. Hence, every holonomy invariant subspace defines two parallel distributions $\mathbb{V}$ and $\mathbb{V}^\perp$.
 If $g$ is indefinite and $E$ is a degenerate subspace, i.e., $E\cap E^\perp\not=\{0\}$, there is a totally light-like distribution $\mathbb{V}\cap \mathbb{V}^\perp$ with totally geodesic leaves. 
If $E$ is non-degenerate, i.e., if $T_pM=E\+E^\perp$, then we also have $TM=\mathbb{V}\+\mathbb{V}^\perp$. In this case we say that the holonomy group acts {\em decomposably}, or for short that $(M,g)$ is decomposable. If there is no non-degenerate subspace that is invariant under $\Hol(M,g)$ we say that the holonomy acts {\em indecomposably}, or that $(M,g)$ is indecomposable.  If $g$ is indefinite, the holonomy group may act indecomposably without acting irreducibly.  This is the case if the holonomy group admits a totally light-like invariant subspace, but no non-degenerate invariant subspace.

If the holonomy group acts decomposably, not just the tangent space decomposes into holonomy invariant subspaces, but under certain global assumptions also the manifold decomposes into a semi-Riemannian product. 
This is due to the  splitting theorems of de Rham \cite{derham52} and Wu \cite{wu64}: if $(M,g)$ is complete and simply connected and the holonomy group acts decomposably,  then $(M,g)$ is isometric to a global semi-Riemannian product $(M_1,g_1)\times (M_2,g_2)$ and the holonomy representation of $(M,g)$ is isomorphic to the product of the holonomy representations of $(M_i,g_i)$. The manifolds $M_i$ correspond to the totally geodesic foliations of $M$ into the leaves of the parallel complementary distributions  $\mathbb{V}$ and $\mathbb{V}^\perp$.  

The notions of irreducibility and (in-)decomposability can also be formulated for the holonomy algebras $\hol(M,g)$, depending on wether the holonomy algebra admits a (non-degenerate) invariant subspace. Note that if $M$ is not simply connected,  the holonomy algebra acting decomposably does not imply that the holonomy group does act decomposably. In particular, the existence of a non-degenerate subspace that is invariant under the holonomy algebra does not necessarily imply the existence of a globally defined parallel distribution. 
  
%  
%  and in this case also say that {\em $(M,g)$ is decomposable}. The latter terminology is justified by the splitting theorems of de Rham \cite{derham52} and Wu \cite{wu64}: if $(M,g)$ is complete and simply connected with a decomposable holonomy representation then $(M,g)$ is isometric to a global semi-Riemannian product $(M_1,g_1)\times (M_2,g_2)$ and the holonomy representation of $(M,g)$ is isomorphic to the product of the holonomy representations of $(M_i,g_i)$. If the global assumption of simply connectedness and completeness are not satisfied, the parallel distributions still define complementary totally geodesic foliations of $M$ into the leaves of $\mathbb{V}$ and $\mathbb{V}^\perp$.  
%  
% We say that the holonomy representation of $(M,g)$ is {\em irreducible} if it admits no invariant subspace and {\em indecomposable} if it admits no non-degenerate invariant subspace.  For short we will also say that $(M,g)$ is irreducible or indecomposable. If $g$ is indefinite, $(M,g)$ can be indecomposable without being irreducible. 
% 
% This is the case if the holonomy representation admits a totally light-like invariant subspace, but no non-degenerate invariant subspace. Gallot's theorem is the statement that cones over complete Riemannian mnaifolds
% 
% 
%
%Hence know the holonomy group 
%... holonomy groups ... holonomy principle ....

In regards to the holonomy algebra of a Riemannian cone, Gallot proved the following result:
\begin{theorem}[S. Gallot, \cite{gallot79}]\label{gallottheo}
Let $(M,g)$ be a complete Riemannian manifold of dimension $\ge 2$ such that the holonomy algebra  of  the cone $(\hm,\hg_+)$    does not act irreducibly. 
 Then  $(\hm,\hg_+)$ is flat and hence
 $(M,g)$
has constant curvature $1$. 
%In particular, the holonomy of $(M,g)$ is $SO(n)$.
If, in addition, $(M,g)$ is simply connected, then $(M,g)$ is isometric to the
standard sphere.
\end{theorem}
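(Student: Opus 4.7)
The goal is to convert reducibility of $\hol(\widehat M,\widehat g_+)$ into enough structure on the base that completeness forces $M$ to be a round sphere.  Since $\widehat g_+$ is positive definite, any subspace invariant under the holonomy algebra is automatically non-degenerate, hence so is its orthogonal complement; reducibility therefore yields a $\widehat\nabla$-parallel orthogonal splitting $T\widehat M=\mathbb V\oplus\mathbb V^\perp$ into distributions of positive rank, with parallel orthogonal projectors $P$ and $P^\perp$.

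The key is to exploit the radial position vector field $\xi=r\partial_r$, for which \re{lem1} gives $\widehat\nabla_X\xi=X$ for every $X$.  Writing $\xi=V+V^\perp$ with $V:=P\xi$ and $V^\perp:=P^\perp\xi$, parallelism of the projectors yields $\widehat\nabla_XV=PX$ and $\widehat\nabla_XV^\perp=P^\perp X$.  Consequently the non-negative functions $f:=\tfrac12\widehat g(V,V)$ and $f^\perp:=\tfrac12\widehat g(V^\perp,V^\perp)$ have gradients $V, V^\perp$, Hessians equal to the parallel projectors, and satisfy $f+f^\perp=\tfrac12 r^2$.  Along any cone geodesic $\widehat\gamma$, $P\dot{\widehat\gamma}$ is parallel, so $t\mapsto f(\widehat\gamma(t))$ is a quadratic polynomial with non-negative leading coefficient.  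By Proposition \ref{geolem}, in the Riemannian case ($c\epsilon=1$) every cone geodesic with $a\ge 0$ is defined on all of $[0,\infty)$, providing a large family of quadratic identities to exploit.

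The main step is a local-to-global argument.  By the local de Rham--Wu splitting, $\widehat M$ is locally a Riemannian product $\widehat M_1\times\widehat M_2$ whose factors integrate $\mathbb V$ and $\mathbb V^\perp$.  The equation $\widehat\nabla\xi=\mathrm{Id}$ descends to $\nabla^{(i)}\xi_i=\mathrm{Id}_{T\widehat M_i}$ on each factor, and this is the characterising property of a metric cone; thus each $\widehat M_i$ is locally a cone $(0,\infty)\times N_i$ with metric $\d r_i^2+r_i^2\,g_{N_i}$.  The coordinate change $r_1=r\cos\theta$, $r_2=r\sin\theta$ rewrites the product of two cones as the cone over the warped join
\[
g=\d\theta^2+\cos^2\theta\,g_{N_1}+\sin^2\theta\,g_{N_2},
\]
so $(M,g)$ carries this warped-join structure.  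Completeness of $(M,g)$ and smoothness of $g$ at the loci $\theta=0$ and $\theta=\pi/2$, where the corresponding warping factors collapse, then force each $(N_i,g_{N_i})$ to be a round unit sphere.  Hence $(M,g)$ is isometric to $S^n$ with its round metric of curvature $1$ and the cone is flat Euclidean space; in the simply connected case, Cartan's theorem on space forms identifies $(M,g)$ with the standard sphere directly.

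The step I expect to be most delicate is the passage from the local de Rham product to a global warped-join description.  One must use completeness of $(M,g)$ (rather than of $\widehat M$) to extend the local splitting across the zero sets of $V$ and $V^\perp$, which are precisely the loci where the warping factors vanish, and to verify that $g$ extends smoothly there.  Ruling out the degenerate possibility that one of $V,V^\perp$ vanishes identically, which would force a ``trivial'' decomposition of $\xi$, is a secondary point that follows from $\widehat\nabla\xi=\mathrm{Id}$ and the positive rank of both factors.
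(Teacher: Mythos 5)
Your setup (the parallel splitting $T\hm=\mathbb{V}\oplus\mathbb{V}^\perp$, the projections $V,V^\perp$ of the position field $r\del_r$, the functions $f,f^\perp$ with parallel Hessians) is sound and close in spirit to how the paper begins, but the core of your argument has a genuine gap: the passage from the \emph{local} de Rham/product-of-cones picture to a \emph{global} warped-join structure on $M$, together with the smoothness analysis at the collapse loci, is the entire content of the theorem, and you only flag it as ``delicate'' without providing an argument. Note that you cannot get this from the de Rham--Wu splitting theorem: that theorem requires completeness and simple connectedness of the manifold being split, and the cone $(\hm,\hg_+)$ is \emph{never} complete (the vertex $r=0$ is missing) even when $(M,g)$ is complete. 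So the globalization must be done by hand using completeness of the base --- and in your outline completeness never actually enters: the ``quadratic identities'' you set up via Proposition \ref{geolem} are never used (and you quote the $a\ge 0$ case, whereas the relevant geodesics in any such argument have $a<0$ and are only defined on a finite interval whose length must be estimated). That this step cannot be waved through is shown by the indefinite analogues: Example \ref{ex1} and Theorem \ref{tmain4} exhibit complete bases whose cones carry exactly this local product-of-cones structure without being flat, so any correct proof must exploit definiteness quantitatively at precisely the point you skip. Your route can in principle be completed --- it amounts to reproving Theorem \ref{tmain4} in definite signature --- but as written the theorem's difficulty has been relocated into an unproven claim.

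There is also a concrete error downstream of this gap: your unqualified conclusion ``hence $(M,g)$ is isometric to $S^n$'' is false without simple connectedness, and the counterexample shows your claimed global structure fails. Take $M=\mathbb{R}\mathrm{P}^n$ with the constant curvature $1$ metric: it is complete, its cone is flat, so the holonomy algebra acts reducibly, yet no dense open subset of $M$ is isometric to a global product $(0,\pi/2)\times N_1\times N_2$ with the join metric --- one only obtains a finite quotient of such a product, because invariance under the holonomy \emph{algebra} produces parallel distributions only after passing to the universal cover of the cone. This is why the paper's proof first passes to the universal cover, then proves a statement (flatness of the cone) that descends, and asserts the sphere only under the additional simple connectedness hypothesis. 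The paper's actual argument is quite different from yours and avoids structure theory altogether: Lemma \ref{gallotlem1} shows, via the parallel field $F(t)=\rho(t)\del_r-t\widehat{\gamma}^{\,\prime}(t)$ and $\del_r\inter\widehat{R}=0$, that the exponential image of $\mathbb{V}^\perp$ at any point of $C=\{\del_r\in\mathbb{V}\}$ is flat; a second lemma uses the explicit geodesics of Proposition \ref{geolem} (this is exactly where completeness of $(M,g)$ enters, through the inequality $0<\hg(\del_r,W)<1$ forcing the maximal existence time past $t=1$) to show every point outside $C\cup C_\perp$ reaches both $C$ and $C_\perp$ along suitable geodesics, hence lies on a flat leaf of each distribution and has a flat neighbourhood by the local product structure; density then gives flatness of the whole cone.
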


We will present Gallot's proof of this theorem in Section \ref{parsec}.
Here we will only explain its first step, which is needed in order to understand possible generalisations and which is based on the aforementioned  {\em holonomy principle}: since the aim is to show that the cone is flat, we can pass to the universal cover, which is the cone over the universal cover of $M$, and assume that the holonomy {\em group} of this cone does admit an invariant subspace $E\subset T_p\hm $. Hence,  this invariant subspace  defines a  vector distribution $\mathbb{V}\subset T \hm$ that is invariant under parallel transport. With $E$ holonomy invariant, its orthogonal space $E^\perp$ is also holonomy invariant and defines a parallel distribution $\mathbb{V}^\perp$.  If the cone is Riemannian, $\mathbb{V}$ and $\mathbb{V}^\perp$  are non-degenerate and hence the tangent space splits into a direct sum of parallel vector distributions $TM=\mathbb{V}\+ \mathbb{V}^\perp$. Both distributions are parallel and hence involutive and define totally geodesic leaves. This splitting and the induced foliation is then used in Gallot's proof.

If the cone metric is indefinite, for example by considering time-like cones over Riemannian manifolds or because already $(M,g)$ is indefinite, a holonomy invariant subspace may be degenerate, i.e., $E\cap E^\perp\not=0$, and hence $\mathbb{V}\cap \mathbb{V}^\perp\not=\{0\}$, so that  the resulting parallel distributions are not complementary.
The following example  shows that  Gallot's Theorem is false in this case.

\bbsp  \label{horosphere} Consider the semi-Riemannian manifold 
\begin{equation}
\label{expwarp}
\left( M=\bR\times N,
g=\d s^2+e^{-2s}g_N\right),
\end{equation} 
where $(N,g_N)$ is a
semi-Riemannian manifold.
Then 
%\begin{itemize} 
%\item[1.] 
the light-like vector field
\[V=e^{-s}(\p_r+\tfrac{1}{r}\p_s)\] on the time-like cone $(\widehat{M},\hg_{-})$ is parallel.
%\item[2.] If $(N=N_1\times N_2,g_N=g_1+g_2)$ is a product of a flat
%pseudo-Riemannian manifold $(N_1,g_1)$ and of a non-flat
%pseudo-Riemannian manifold $(N_2,g_2)$, then $\widehat{M}$ is
%locally decomposable and not flat.
%In fact, there is a parallel
%non-degenerate  \emph{flat} distribution of dimension $\dim N_1$
%on $\widehat{M}$. \end{itemize}
The manifold $(M,g)$ has constant negative curvature only if $g_N$ is flat. 
If we now assume that $(N,g_N)$ is a complete Riemannian manifold, then, by Proposition \ref{completelem}, $(M,g)$ is a complete Riemannian manifold whose time-like cone $(\hm,\hg_-)$ admits a parallel light-like vector field and hence has a non irreducible holonomy group. However, unless $g_N$ is flat, the cone $\hg_-$ is not flat. This shows that Gallot's Theorem cannot hold when the cone has a parallel light-like vector field.
\ebsp

This example suggests that one has to strengthen the assumptions in Gallot's Theorem in the indefinite setting. To get Gallot's proof started, instead of assuming the existence of some holonomy invariant subspace, on should require  the existence of a {\em non-degenerate} invariant subspace, that gives  complementary parallel distributions $T\hm=\mathbb{V}\+\mathbb{V}^\perp$.
However, the following example shows that such a modification of Gallot's Theorem also fails.

\bbsp\label{ex1} Let $(N,g_N)$ be a complete semi-Riemannian manifold of
dimension at least~$2$ and which is not of constant curvature $1$.
Then the semi-Riemannian manifold 
\begin{equation}
\label{coshmetric}
(M=\bR\times N,g=ds^2+\ch^2(s)g_N)\end{equation}
is complete  by Proposition \ref{completelem}.
%The Levi-Civita connection $\nabla$ of $(M,g)$
%are given by
%\[
%\begin{array}{rcl}
%\nabla_{\del_s}\del_s&=&0,\\
%{\nabla}_X \p_s &=& \th(s) X,\\
%{\nabla}_X Y &=& \nabla^F_X Y +\ch(s)\sh(s) g_N(X,Y)\p_s,
%%\label{nabxy}
%\end{array}
%\] where  $X,Y\in \Gamma(TF)$ and $\nabla^F$ is the Levi-Civita connection of
%the manifold $(N,g_N)$. T
Using equation (\ref{warplc}) it is easily established that the spacelike vector field 
\[
V=-\sinh(s)\del_r+\frac{\cosh(s)}{r}\del_s\]
on  the time-like cone $(\hm,\hg_{-})$
is parallel. 
For the  curvature tensor $R$ of $(M,g)$ we have
\[
 R(X,Y)Z=  R_N (X,Y)Z + \tanh^2(s)\left(g_N(Y,Z)X - g_N(X,Z)Y \right),
\]
where $X,Y,Z,\ U\in TF$ and $R_N$ is the curvature tensor  of $(N,g_N)$.
This shows that $(M,g)$ cannot have constant sectional curvature, unless
$N$ has constant curvature $1$.
Thus, in general  the cone
$(\widehat{M},\widehat{g})$ over the complete manifolds $(M,g)$  is decomposable but not flat. 
\ebsp

\subsection{Decomposable cones over complete and over compact manifolds}
In this section we will review a few results that show to which extent Gallot's Theorem generalises to the semi-Riemannian context, having in mind the counter examples of the previous section. We will mainly focus on space-like cones, as the corresponding results for time-like can be obtained by multiplying the cone metric by $-1$.

In this section we will review a few results that show to which extent Gallot's Theorem generalises to the semi-Riemannian context, having in mind the counter examples of the previous section.
First we consider cones over complete semi-Riemannian manifolds.
\btheo[\cite{acgl07}] \label{tmain4} Let $(M,g)$ be a complete  semi-Riemannian  manifold of dimension $\ge 2$ and assume that the holonomy algebra of the cone $(\hm,\hg_+)$ acts  decomposably. 
 Then there exists an open dense submanifold $M'\subset M$ such that each connected component of $M'$   is
isometric to a pseudo-Riemannian manifold of the form
\begin{itemize}
\item[(1)] a pseudo-Riemannian manifold $M_1$ of constant sectional curvature $1$, or
\item[(2)] a pseudo-Riemannian manifold $M_2=\bR_{>0} \times N_1\times N_2$
with the metric
$$-\d s^2+ \ch^2(s) g_1+\sh^2(s)g_2,$$
where $(N_1,g_1)$ and $(N_2,g_2)$ are semi-Riemannian manifolds
and $(N_2,g_2)$ has constant sectional curvature $-1$ or $\dim N_2 \le 1$.
%If the cone over $M_2$ is not  flat,
%then the holonomy algebra of the cone over  $(N_1,g_1)$ is
%indecomposable.

Moreover, the cone $\hm_2$ is isometric to the open subset $\{ r_1>r_2\}$
in   the product of the space-like cone $(\bR_{>0}\times N_1, \d r^2 + r^2 g_1)$
over $(N_1,g_1)$ and the time-like cone
$(\bR_{>0} \times N_2,-\d r^2 + r^2 g_2)$ over $(N_2,g_2)$.
\end{itemize}
 \etheo
 Note that Example \ref{ex1} shows that this theorem is sharp.
 
Next we consider cones over closed semi-Riemannian manifolds $(M,g)$, i.e., when $M$ compact without boundary. Recall that for indefinite metrics compactness of $M$ does not imply the geodesic completeness of $(M,g)$, so we have to assume it, in order to get a version of Gallot's Theorem under these strengthened assumptions.

\btheo[\cite{acgl07}]\label{compacttheo}
Let $(M,g)$ be a closed  and geodesically complete semi-Riemannian
manifold of dimension $\ge 2$. If the cone $(\hm,\hg_+)$ 
is  decomposable, then it is flat 
and hence  $(M,g)$ has constant curvature $1$.
\etheo

Since there is no simply connected  compact indefinite
pseudo-Riemannian manifold
of constant curvature 1, we obtain the following
corollary.

\bfolg[\cite{acgl07}]\label{cor1}
If $(M,g)$ is a simply connected  compact and complete indefinite
pseudo-Riemannian manifold, then the holonomy group of the cone
$(\widehat{M},\widehat{g})$ is indecomposable.
\efolg
Theorem \ref{compacttheo} was strengthened by Matveev in \cite{matveev10}.
\btheo[V.~Matveev \cite{matveev10}]\label{matveevtheo}
Let $M$ be a closed manifold.
\bnum
\item If $g$ is a light-like complete indefinite semi-Riemannian metric on $M$, then the cone $(\hm,\hg_+)$ is  indecomposable.
\item 
If $g$ is a Riemannian metric on $M$, then the cone $(\hm,\hg_-)$  is indecomposable.
\enum
\etheo
Note that (2) in the Theorem \ref{matveevtheo} implies that even though the time-like cone over a compact quotient $M=\mathbb{H}^n/\Gamma$ of hyperbolic space $\mathbb{H}^n$ is flat, its holonomy group acts indecomposably.

\subsection{Local structure of non irreducible cones}
In  this section we will review some results about the local structure of non irreducible cones. We start with decomposable cones.

\btheo[\cite{acgl07}]
Let $(M,g)$ be a  semi-Riemannian manifold such that the holonomy algebra of the  cone $(\hm,\hg_+)$ acts decomposably.
Then 
there exists an open dense submanifold $M'\subset M$ such that
any point $p\in M'$ has a neighborhood $U$ that is isometric to a
semi-Riemannian manifold of the form
$(a,b)\times N_1 \times N_2$ with the metric given either by
\begin{equation}\label{local}
g_+=\d s^2+ \cos^2(s) g_1+\sin^2(s)g_2\quad\text{or}\qquad g_-=-
\d s^2+ \ch^2(s) g_1+\sh^2(s)g_2,\end{equation}
where $g_1$ and $g_2$ are metrics on $N_1$ and $N_2$ respectively.

Moreover, $\bR_{>0}\times U\subset\widehat{M}$ with the metric $g_\pm$ in  (\ref{local}) 
% a neighborhood of the form
%$$((a_1,b_1)\times N_1)\times ((a_2,b_2)\times N_2),\quad (a_1,b_1),(a_2,b_2)\subset\bR^+$$
is locally isometric to the product of cone metrics
 $$(\d r_1^2+r_1^2g_1)+(\pm \d r_2^2+r_2^2g_2).$$
% Hence locally, the cone over the metric in (\ref{local}) is decomposable.
\etheo
Note that this theorem also applies to the Riemannian context. The cone over the incomplete Riemannian metric $g_+$ in (\ref{local}),  with $g_1$ and $g_2$ Riemannian,  is decomposable without being flat.

Next we consider the case when the holonomy of the cone admits an invariant degenerate subspace $E$. This implies the existence of an invariant subspace $E\cap E^\perp$ that is totally light-like. We restrict ourselves to the case when the  dimension of $E\cap E^\perp$  is $1$ or $2$. In this case we have a parallel distribution of totally light-like lines or planes. 
%
%
%\btheo
%Let $(M,g)$ be a  pseudo-Riemannian manifold  and $(\widehat{M}=\bR^+\times M,\widehat{g}=dr^2+r^2g)$  the cone over $M$.
%Suppose that the holonomy group of $(\widehat{M},\widehat{g})$ admits a non-degenerate proper invariant  subspace.
%Then  there exists a dense open submanifold $U_1\subset M$ such that each point $x\in U_1$ has an open neighborhood $W\subset U_1$
%that satisfies one of the following conditions
%\begin{itemize}
%\item[(1.)] For $W$ we have a decomposition $$W=(a,b)\times N_1\times N_2,\quad (a,b)\subset \left(0,\frac{\pi}{2}\right)$$
%and for the metric $g|_{W}$ we have
%$$g|_W=ds^2+ \cos^2(s) g_1+\sin^2(s)g_2,$$
%where $(N_1,g_1)$ and $(N_2,g_2)$ are pseudo-Riemannian manifolds;
%
%Moreover, any point  $(r,x)\in\bR^+\times W\subset\widehat{M}$  has a neighborhood of the form
%$$((a_1,b_1)\times N_1)\times ((a_2,b_2)\times N_2),\quad (a_1,b_1),(a_2,b_2)\subset\bR^+$$
%with the metric $$(dt_1^2+t_1^2g_1)+(dt_2^2+t_2^2g_2).$$
%
%\item[(2.)]  For $W$ we have a decomposition $$W=(a,b)\times N_1\times N_2,\quad (a,b)\subset\bR^+$$
%and for the metric $g|_{W}$ we have
%$$g|_W=-ds^2+ \ch^2(s) g_1+\sh^2(s)g_2,$$
%where $(N_1,g_1)$ and $(N_2,g_2)$ are pseudo-Riemannian manifolds.
%
%Moreover, any point  $(r,x)\in\bR^+\times W\subset\widehat{M}$  has a neighborhood of the form
%$$((a_1,b_1)\times N_1)\times ((a_2,b_2)\times N_2),\quad (a_1,b_1),(a_2,b_2)\subset\bR^+$$
%with the metric $$(dt_1^2+t_1^2g_1)+(-dt_2^2+t_2^2g_2).$$
%\end{itemize}
%\etheo

 \btheo[\cite{acl19}]\label{localtheointro}
 Let $(\hm,\hg_-)$ be the time-like cone  over a semi-Riemannian manifold $(M,g)$. 
 If  the cone admits a parallel light-like line field $\L$, then locally there is a parallel trivializing section of $\mathbf{L}$. Moreover, on a dense open subset $\hm_{\mathrm{reg}} \subset \hm$, the metric 
 $\hg$ is   locally  isometric to a warped product of the form 
\begin{equation}\label{ncone}\tg_0=2\, \d u\, \d v+u^2\gg_0,\end{equation}
with a semi-Riemannian metric $\gg_0$, and the metric $\gg$ is locally of the form
\[\gg=\d s^2+\mathrm{e}^{2s}\gg_0.\]
\etheo

The results in the case when $E\cap E^\perp$ is of dimenion $2$ are more technical and related to the existence of s shearfree, geodesic, light-like congruence on the base:
\btheo[\cite{acl19}]\label{planetheorintro}
The time-like cone $(\hm,\hg)$ over a semi-Riemannian manifold $(M,g)$ admits a parallel, totally  light-like $2$-plane field if and only if, locally over an open dense subset, 
the base $(\M,\gg)$  admits two vector fields $V$ and $Z$ satisfying
% \belabel{nabVZ}
% \begin{array}{rcl}
% \nabla_XV&=&\alpha(X)V+ \gg(V,X)Z
% \\[2mm]
% \nabla_XZ&=&\beta(X)V+ \gg(Z,X)Z -X
% \end{array}
% \eeq
\begin{equation}\label{gVZint}
\gg(V,V)=0,\ \ \gg(Z,Z)=1,\ \ \gg(V,Z)=0,\end{equation}
and such that
 \begin{eqnarray}
  %\label{nabVint}
  \nabla_XV
  =
  \alpha (X)V + \gg(X,V)Z,
  &&
    \label{nabZint}
    \nabla_XZ
  =
  -X +\beta(X)V+\gg(X,Z)Z,
  \end{eqnarray}
 with $1$-forms $\alpha $ and $\beta$ on $\M$.
 In particular,  the base $(\M,\gg)$ admits a geodesic, shearfree light-like congruence defined by $V$.
 \etheo
Note that the first equation in equation \re{nabZint} implies that $V^\perp$ is integrable. This allows us to  determine the local form of the metrics with vector fields $V$ and $Z$ satisfying equations (\ref{gVZint}) and (\ref{nabZint}):

\bs[\cite{acl19}]
A semi-Riemannian metric $(M,g)$ admits vector fields $V$ and $Z$ with (\ref{gVZint}) and (\ref{nabZint}) if and only if $(M,g)$ is locally of the form 
$M = M_0 \times \rr^3$ and 
\[ g = \d s^2 + e^{-2s}g_0(u) + 2\, \d u\,\eta,\label{metric:equ}\]
for a family of metrics $g_0(u)$ on $M_0$ depending on $u$ and 
a $1$-form $\eta$ on $M$ such that $\eta (\partial_t)$ is nowhere vanishing 
satisfying the following system of first order PDEs: 
 \begin{equation}\begin{array}{rcl}
  \partial_t\eta_t=\partial_s\eta_t\ =\ X\eta_t\ =\   \partial_t(\eta(X)) &=&0,
  \\
  \partial_t\eta_s&  =&  2\eta_t,
  \\
  \partial_s\,\eta (X) -X\ \eta_s&=&-2\eta (X)
   \label{system:equ-int}\end{array}\end{equation}
 for all $X\in \Gamma(TM_0)$ and where we denote $\eta_t=\eta(\partial_t)$ and $\eta_s=\eta(\partial_s)$.
\es
One can solve  explicitly  the system \re{system:equ-int} in the following way:
Let $f_1=f_1(u)$ be an arbitrary nowhere vanishing smooth function on 
the real line equipped with the coordinate $u$ and $f_2= f_2(x,s,u)$ an arbitrary smooth function on $M$ which does not 
depend on $t$. Let $h_i=h_i(x,s,u)$ be a ($t$-independent) solution of the ordinary differential equation 
\[ \partial_sh_i +2h_i= \partial_if_2\]
for all $i=1,\ldots ,n_0$, where $\partial_i = \partial /\partial x^i$. Then 
\[ \eta_t := f_1(u),\quad \eta_s :=2tf_1(u)+f_2(x,s,u),\quad \eta (\partial_i) := h_i(x,s,u)\]
solves  \re{system:equ-int} and every solution is of this form. 

This provides us 
with a construction method of metrics whose cone admits a totally light-like $2$-plane.

 \bbem
For completeness we should mention further results in \cite{acgl07} for the case when the cone admits a holonomy invariant maximal isotropic subspace $\mathbb{V}=\mathbb{V}^\perp$ and an invariant maximally isotropic complement. This is equivalent to the existence of a para-K\"{a}hler structure on the cone. In \cite[Section 8]{acgl07} we have shown that the existence of a para-K\"{a}hler structure on the cone over $(M,g)$ is equivalent to the existence of a  para-Sasaki structure on $(M,g)$ and a similar correspondence for para-hyper-K\"{a}hler structures on the cone and   para-$3$-Sasakian structures on $(M,g)$.
\ebem
 \subsection{Holonomy of cones}
In the last part of this survey section we are going to review results about the possible holonomy groups of cones.  We will consider the fundamental cases when the 
holonomy group acts irreducibly or  not irreducibly but indecomposably. 

 \subsubsection{Irreducible cone holonomies}
In the case when the holonomy algebra of the cone is irreducible, we can use Berger's list and single those out that can be cone holonomies. They key here is to observe that $\del_r\inter \hut{R}=0$ prevents cones from being  Einstein with non zero Einstein constant. This shows, for example, that cones cannot be irreducible locally symmetric spaces.  So by ruling out all holonomy groups of Einstein cones, we obtain:

\btheo[\cite{acl19}]
\label{irredtheointro}
If $(\hm,\hg)$ is a time-like cone with irreducible holonomy algebra $\mathfrak{g}$, then $\g$ is isomorphic to one of the following Lie algebras
\begin{equation}\label{berger0}
\begin{array}{rclrclrcl}
&&\so(t,s),
&
\mathfrak{u}(p,q),\ \su(p,q) &\subset &  \so(2p,2q),
&
 \sp(p,q) &\subset & \so(4p,4q),
 \\
\so(n,\C)&\subset & \so(n,n),
& 
\g_2^\C&\subset& \so(7,7),
& \spin(7,\C)&\subset&\so(8,8),
 \\
 &&&
\g_2&\subset &\so(7), 
& 
\spin(7) &\subset & \so(8),
\\
&&&
\g_{2(2)} &\subset & \so(3,4),
&
\spin(3,4) &\subset &  \so(4,4).
\end{array}\end{equation}
\etheo

\subsubsection{Holonomy of non irreducible, indecomposable cones}
In general the classification of non irreducible, indecomposable  holonomy groups is widely open and only solved in Lorentzian and in some special cases in signature $(2,n)$ and $(n,n)$. We will focus here in the case where the invariant totally light-like subspace has dimension $1$. The key here is the result in Theorem \ref{localtheointro}, where it was shown that that a cone that admits a parallel light-like line distribution is locally isometric to a metric of the form (\ref{ncone}).

\btheo[\cite{acl19}]\label{holtheointro}
Let $(N,\gg_0)$ be a semi-Riemannian  manifold in dimension $n$ and $\tg_0$ the metric  defined in (\ref{ncone}).
  If the holonomy of $\tg$ acts indecomposably, then 
\begin{equation}\label{holindec}
\hol(\tg_0)\ \subset\  \hol(\gg_0)\ltimes\rr^{t,s}
\end{equation}
where $ \hol(\gg_0)\ltimes\rr^{t,s}$ is a subalgebra of the stabiliser algebra of $\del_v$ in $\so(t+1,s+1)$, i.e., in $\so(t,s)\ltimes\rr^{t,s}\ =\ \so(t+1,s+1)_{\del_v}$, and 
 \[
\mathrm{pr}_{\so(t,s)}(\hol(\tg_0))\ =\ \hol(\gg_0).\]
There is an equality in (\ref{holindec}) whenever $(N,g_0)$ is one of the following:
\bnum
\item
 an irreducible locally symmetric space, or a product thereof;
\item a Riemannian manifold;
\item  a Lorentzian manifold without a parallel light-like vector field. 
\enum
\etheo

%
%
%Then the manifold $(M,g)$ has full holonomy algebra
%$\so(p,q)$, where $(p,q)$ is the signature of the metric $g$. 
% 

\section{Semi-Riemannian cones with parallel vector fields}
\label{parsec}

In this section we will consider the special case when the invariant subspace under the holonomy group of the cone is given by a parallel vector field, that is, the rank of the invariant distribution is one and the distribution admits a global parallel section. For this special case we will prove  versions of the theorems in the previous section that are slightly stronger and more specific, and we will prove Theorem  \ref{introtheo}. Before we do this we will review Gallot's original proof of his theorem  in order to see when we can generalise it to the case of a parallel vector field. We will see that this can be done when the radial vector field and the parallel vector field have the same causal character.

\subsection{The proof of Gallot's Theorem}
 Gallot's proof of  Theorem \ref{gallottheo}
 uses the following fundamental observation, which holds not only for Riemannian cones.
\begin{lemma} 
\label{gallotlem1}Let $(\hm,\hg_\epsilon)$ be semi-Riemannian cone and let $\mathbb{V}\subset T\hm$ be a non degenerate, parallel distribution. Let $p\in M$ such that 
$\del_r|_p\in \mathbb{V}|_p$ and $N^\perp|_p$ the leaf of $\mathbb{V}^\perp$ through $p$. Then the 
image in  $N^\perp_p$ under the exponential map restricted to $\mathbb{V}^\perp_p\in T_p\hm $ is flat.
\end{lemma}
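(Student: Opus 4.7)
My plan is to exploit the natural ``position vector field'' $U = r\p_r$ on the cone, which by the connection formulas~\re{lem1} satisfies $\hnab_X U = X$ for every $X \in T\hm$. The parallel non-degenerate splitting $T\hm = \mathbb{V}\+\mathbb{V}^\perp$ will let me restrict a suitable component of $U$ to the leaf $N^\perp|_p$, producing a tangent vector field on the leaf that vanishes at $p$ and whose covariant derivative is the identity. This is exactly the classical characterisation of flatness near the zero of a ``position'' vector field.

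Concretely, I would decompose $U = U_\mathbb{V} + U_{\mathbb{V}^\perp}$ pointwise according to the parallel distributions. Since both $\mathbb{V}$ and $\mathbb{V}^\perp$ are parallel, the covariant derivative respects the decomposition: $\hnab_X U_\mathbb{V} \in \mathbb{V}$ and $\hnab_X U_{\mathbb{V}^\perp} \in \mathbb{V}^\perp$. Projecting the identity $\hnab_X U = X$ for $X \in \mathbb{V}^\perp$ onto the two factors yields $\hnab_X U_\mathbb{V} = 0$ and $\hnab_X U_{\mathbb{V}^\perp} = X$. Since by assumption $\p_r|_p \in \mathbb{V}|_p$, the vector $U|_p$ lies in $\mathbb{V}|_p$ and hence $U_{\mathbb{V}^\perp}|_p = 0$. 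Because $\mathbb{V}^\perp$ is parallel its leaf $N^\perp|_p$ is totally geodesic, so the induced Levi-Civita connection of the leaf agrees with $\hnab$ on tangential fields. The restriction $V := U_{\mathbb{V}^\perp}|_{N^\perp|_p}$ is therefore a tangent vector field on the leaf satisfying $V|_p = 0$ and $\nabla^{\mathrm{leaf}} V = \Id$.

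The lemma is now reduced to the general fact that a semi-Riemannian manifold $(L,h)$ admitting a vector field $V$ with $\nabla V = \Id$ and $V|_p = 0$ is flat on the image of $\exp_p$. To prove this, I would fix $X \in T_p L$ and consider the geodesic $\gamma(t) = \exp_p(tX)$ together with the geodesic variation $\Phi(t,s) = \exp_p(t(X+sY))$. Integrating $\nabla_{\p_t}V = \p_t$ along each $t$-curve from the initial value $V = 0$ at $t=0$ gives $V = t\p_t$ throughout the surface. Computing $\nabla_{\p_s}V$ in two ways---directly via $\nabla V = \Id$, which gives $\p_s$, and from $V = t\p_t$ together with $[\p_s,\p_t]=0$, which gives $t\nabla_t\p_s$---one obtains the identity $J_Y(t) = t\nabla_t J_Y(t)$, where $J_Y$ is the Jacobi field $\p_s\Phi|_{s=0}$ with $J_Y(0)=0$ and $\nabla_t J_Y(0)=Y$. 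Differentiating once more forces $\nabla_t\nabla_t J_Y = 0$, so the Jacobi equation gives $R(J_Y,\gamma')\gamma' = 0$. As $X,Y \in T_p L$ vary, this yields $R(\,\cdot\,,Z)Z \equiv 0$ on the image of $\exp_p$; polarisation in $Z$ combined with the first Bianchi identity then forces $R\equiv 0$.

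The main subtlety is the initial reduction: one must check that $U_{\mathbb{V}^\perp}$ is smooth (guaranteed by non-degeneracy of $\mathbb{V}$), genuinely tangent to the leaf (immediate from $U_{\mathbb{V}^\perp} \in \mathbb{V}^\perp = T N^\perp|_p$), and that $\nabla^{\mathrm{leaf}}$ agrees with $\hnab$ on the leaf (by total geodesicity of the leaves of a parallel distribution). Once these ingredients are in place, the remainder of the argument is a compact packaging of the classical observation that a ``position'' vector field with a zero forces flatness in a normal neighbourhood of that zero.
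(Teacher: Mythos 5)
Your reduction to the leaf is correct, and it is a genuinely different route from the paper's: you use the position field $U=r\p_r$ with $\hnab U=\Id$ and project it onto the parallel splitting, whereas the paper works along a single geodesic $\hat\gamma=(\rho,\gamma)$ of the leaf, using the parallel-transported field $F(t)=\rho(t)\p_r-t\hat\gamma'(t)$ together with the facts that curvature endomorphisms preserve parallel distributions and that $\p_r\inter\widehat{R}=0$. Both routes arrive at the same key intermediate fact, namely that the radial Jacobi fields on the leaf vanishing at $p$ satisfy $\nabla_t\nabla_t J_Y=0$, i.e.\ $J_Y(t)=tP_tY$ with $P_t$ the parallel transport along the radial geodesic; your derivation of this via the identity $V=t\,\p_t\Phi$ on the geodesic variation is correct, and it has the merit of not needing the curvature identities of the cone at all.

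The gap is in your very last inference. From $R(J_Y,\gamma')\gamma'=0$ you obtain, at a point $q=\exp_p(tX)$ of the image, the vanishing of $R(\,\cdot\,,Z)Z$ only for $Z$ equal to the \emph{single radial direction} $\gamma'(t)$ at $q$; varying $X$ moves the basepoint $q$, it does not produce new directions $Z$ at the same point. Polarisation of the quadratic map $Z\mapsto R(\,\cdot\,,Z)Z$ requires the identity at one fixed point for a spanning family of vectors $Z$ together with their sums, so it cannot get started from one direction, and the inference ``radial Jacobi operator vanishes $\Rightarrow$ $R\equiv 0$'' is false in general: on a product $\bigl(\rr\times N,\ \d t^2+g_N\bigr)$ with $(N,g_N)$ non-flat one has $R(\,\cdot\,,\p_t)\p_t=0$ along every $\p_t$-geodesic although $R\not\equiv 0$. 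The fix, however, is already contained in what you proved, and the curvature detour through the Jacobi equation is unnecessary: since $J_Y(t)=tP_tY$, the differential $d(\exp_p)_{tX}(Y)=\tfrac{1}{t}J_Y(t)=P_tY$ is a linear isometry for every $tX$ in the domain, so $\exp_p$ restricted to $\mathbb{V}^\perp|_p$ is a local isometry from the flat semi-Euclidean space $\bigl(\mathbb{V}^\perp|_p,\hg|_p\bigr)$ onto its image, which is therefore flat. This Cartan-type conclusion is exactly how the paper finishes (``the Jacobi fields along $\hat\gamma$ are those of a flat manifold''), so you should replace the polarisation argument by it.
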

\bprf
Let $\widehat{\gamma} =(\rho,\gamma) :I\to \hm=\rr_{>0}\times M$ be a geodesic in  $(\hm,\hg)$ with $\widehat{\gamma}(0)= p$ and $\widehat{\gamma}^\prime(0)\in \mathbb{V}^\perp|_p$.
 It is easy to check using (\ref{lem1}) that the vector field 
\begin{equation}
\label{ptvf}
F(t)=\rho(t)\del_r- t\widehat{\gamma}^\prime(t)\end{equation}
is parallel transported along $\hat \gamma$.  
Then with $F(0)=r(p)\del_r|_{\hat p}\in \mathbb{V}|_{\hat p}$ we have that
$F(t)\in \mathbb{V}|_{\widehat{\gamma}(t)}$ for all $t$. Since the curvature tensor leave parallel distributions invariant and because of $\del_r\inter \widehat{R}=0$, we have that
\[\widehat{R}(X,Y) F(t)=\widehat{R}(X,Y)\widehat{\gamma}^\prime(t) \in \rr\cdot \mathbb{V}|_{\widehat{\gamma}(t)} \]
for all $t$. On the other hand we have that $\widehat{\gamma}^\prime(t)\in \mathbb{V}^\perp_{\hat \gamma(t)}$ for all $t$. Hence, with $\mathbb{V}\cap \mathbb{V}^\perp=\{0\}$ this implies   
that 
\[\widehat{R}(X,Y)\widehat{\gamma}^\prime(t)=0,\]
for all vector fields $X$ and $Y$ along $\widehat{\gamma}$ and all $t\in I$. 
From this we see that the Jacobi fields along $\widehat{\gamma}$ are those of a flat
manifold, which implies that $N$ is flat.
\end{proof}

Using this lemma, we can now proceed with the proof of Gallot's Theorem.
\begin{proof}[Proof of Theorem \ref{gallottheo}]
By passing to the universal cover of the cone, which is the cone over the universal cover of $M$, we can assume that $\hm$ is simply connected. 
Let $\mathbb{V}$ be a parallel distribution in $T\hm$ and $\mathbb{V}^\perp$ the orthogonal distribution that are induced by the subspace that is invariant under the holonomy group. If we assume that $\mathbb{V}$ is non degenerate, as we can in the case of a Riemannian manifold $(M,g)$, we have $T\hm=\mathbb{V}\+ \mathbb{V}^\perp$. For a given point $p\in \hm$ denote by $N_p$ and $N_p^\perp$ the totally geodesic leaves of $\mathbb{V}$ and $\mathbb{V}^\perp$. Moreover, denote 
\[C=\{p\in \hm\mid \del_r|_p\in \mathbb{V}|_p\},\quad C_\perp=\{q\in \hm\mid \del_r|_q\in \mathbb{V}^\perp|_q\}.\]
Note that $\widehat{\nabla}_X\del_r=\frac{1}{r}X$ for $X\in TM$ implies that neither $C$ nor $ C_\perp$ can contain an open set and hence that $\hm_0=\hm\setminus (C\cup C_\perp)$ is dense in $\hm$. 

\begin{lemma}
Let $(\hm,\hg)$ be a Riemannian cone over a complete Riemannian manifold $(M,g)$. Then for each point  $x\in \hm\setminus (C\cup C_\perp)$ there is a $p\in C$ and a $q\in C_\perp$ such that $x$ lies in the image of the exponential map  $\exp_p$ restricted to $\mathbb{V}^\perp|_p$ and in the image of $\exp_q$ restricted to $\mathbb{V}|_q$.
\end{lemma}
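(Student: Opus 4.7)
The plan is to combine the parallel vector field $F(t)=\rho(t)\partial_r-t\hat\gamma'(t)$ used in Lemma \ref{gallotlem1} with the explicit maximal domain of cone geodesics from Proposition \ref{geolem}. Fix $x\in\hm\setminus(C\cup C_\perp)$ and decompose $\partial_r|_x=V_x+W_x$ according to $T_x\hm=\mathbb{V}|_x\oplus\mathbb{V}^\perp|_x$; the hypotheses $x\notin C$ and $x\notin C_\perp$ mean $V_x\neq 0$ and $W_x\neq 0$, while the Riemannian condition forces $0<|V_x|^2,|W_x|^2<1$ because $|V_x|^2+|W_x|^2=|\partial_r|^2=1$.

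To locate $p\in C$, I would take the geodesic $\hat\gamma(t)=\exp_x(tY)$ with $Y=-W_x\in\mathbb{V}^\perp|_x$. Since $\mathbb{V}^\perp$ is parallel, $\hat\gamma'(t)$ stays in $\mathbb{V}^\perp$. The parallel vector field $F(t)=\rho(t)\partial_r-t\hat\gamma'(t)$ from Lemma \ref{gallotlem1} can be projected onto $\mathbb{V}$ and $\mathbb{V}^\perp$ (both projections remain parallel), and since $\hat\gamma'(t)\in\mathbb{V}^\perp$ one obtains
\[
(\partial_r|_{\hat\gamma(t)})^{\mathbb{V}^\perp}\;=\;\frac{1}{\rho(t)}\bigl(F^{\mathbb{V}^\perp}(t)+t\,\hat\gamma'(t)\bigr).
\]
Parallel-transporting the right-hand side back to $x$ turns it into $F^{\mathbb{V}^\perp}(0)+tY=r(x)W_x+tY$, which vanishes exactly at $t^\star=r(x)$. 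Hence $p:=\hat\gamma(r(x))$ lies in $C$, and reversing the geodesic realises $x=\exp_p(-r(x)\hat\gamma'(r(x)))$ with $-\hat\gamma'(r(x))\in\mathbb{V}^\perp|_p$, giving the first assertion.

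The step that actually requires care is verifying that the geodesic exists up to parameter $t^\star=r(x)$. Writing $Y=a\partial_r+X$ with $X\in T_xM$ gives $a=\hg(Y,\partial_r)=-|W_x|^2<0$ and $g(X,X)>0$, so the $c\epsilon=1$, $a<0$ clause of Proposition \ref{geolem} yields a maximal interval $[0,-r(x)/a)=[0,r(x)/|W_x|^2)$. This strictly contains $t^\star=r(x)$ precisely because $|W_x|^2<1$, i.e.\ precisely because $V_x\neq 0$, which is exactly the hypothesis $x\notin C_\perp$. I regard this as the main obstacle in the proof: the argument uses the strict inequality $|W_x|<1$ crucially, and this inequality rests on the Riemannian (positive-definite) assumption.

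The construction of $q\in C_\perp$ with $x\in\exp_q(\mathbb{V}|_q)$ is then completely symmetric: now start from $Z=-V_x\in\mathbb{V}|_x$; the geodesic stays in $\mathbb{V}$, the same bookkeeping forces $(\partial_r)^{\mathbb{V}}=0$ at parameter $r(x)$, and geodesic existence through this parameter follows from Proposition \ref{geolem} together with the strict inequality $|V_x|^2<1$ (equivalently $W_x\neq 0$, i.e.\ $x\notin C$). Combining the two constructions gives the lemma.
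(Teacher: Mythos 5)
Your proof is correct and follows essentially the same route as the paper: decompose $\del_r|_x$, shoot the geodesic in the direction $-W_x$, use the explicit maximal domain from Proposition \ref{geolem} together with the key inequality $0<\hg(W_x,W_x)<1$ to reach the critical parameter, and use the parallel field $F(t)=\rho(t)\del_r-t\widehat{\gamma}^\prime(t)$ to identify the endpoint as a point of $C$. The only cosmetic differences are the normalisation of the initial velocity ($-W_x$ versus the paper's $-rW_x$, so your target parameter is $r(x)$ rather than $1$) and that you track the vanishing of the $\mathbb{V}^\perp$-projection of $\del_r$, whereas the paper equivalently observes that the parallel field $F+\widehat{\gamma}^\prime$ starts in $\mathbb{V}$ and equals $\rho\,\del_r$ at the target parameter.
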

\bprf
Let $x\in M$ and assume that $x\not\in C\cup C_\perp$. Let $\del_r|_x=V+W$ with $V=\mathrm{pr}_{\mathbb{V}|_x}(\del_r|_x)\in \mathbb{V}|_x$ and $W=\mathrm{pr}_{\mathbb{V}^\perp|_x}(\del_r|_x)\in \mathbb{V}^\perp|_x\not=0$. Then
\[\hg(\del_r,W)=\hg(V+W,W)=\hg(W,W)
\]
and
\[\hg(V,V)=  \hg(\del_r-W,\del_r-W)=1-\hg(W,W), \]
which implies that  
\begin{equation}\label{crucial}
0<\hg(\del_r,W)<1.\end{equation}
Let   $\hat\gamma=(\rho,\gamma)$ be the maximal geodesic starting at $x$ with $\rho(x)=r$, satisfying  the initial condition
\[\widehat{\gamma}^\prime(0)=-rW=- r \mathrm{pr}_{\mathbb{V}^\perp|_x}(\del_r|_x).\]
Now we have $a=\rho^\prime (0)= -r \hg(\del_r,W) $, and hence, by the previous section,  the maximal geodesic is defined for $t<T$ with 
\[T=-\frac{r}{a}=\frac{1}{\hg(\del_r,W) }>1,\]
by (\ref{crucial}).
Let $F(t)$ be the parallel transported vector field defined in (\ref{ptvf}) along $\hat \gamma$.
Then 
\[F(0)+ \widehat{\gamma}^\prime(0) =r\del_r|_x - 
r \mathrm{pr}_{\mathbb{V}^\perp|_x}(\del_r|_x)
 \in \mathbb{V}|_x.\] 
 The parallel transport of this vector up to $t=1$ is 
 \[ F(1)+\hat\gamma(1)=r(\hat{\gamma}(1)) \del_r|_{\hat{\gamma}(1)},\]
 which is in $\mathbb{V}|_{\hat\gamma(1)}$ as $\mathbb{V}$ is a parallel distribution. This implies that $\hat\gamma(1)\in C$. 
 
 The argument for $C_\perp$ works completely analogously. 
\eprf
Both lemmas imply that each point in $\hm \setminus(C\cup C_\perp)$ lies in the intersection of  two flat leaves of $\mathbb{V}$ and $\mathbb{V}^\perp$ and hence has a flat neighbourhood. This implies that $\hg$ on $\hm\setminus (C\cup  C_\perp) $ is flat. Since $\hm \setminus (C\cup  C_\perp) $ is dense in $\hm$, this implies that $(\hm,\hg)$ is flat. This finishes the proof of Theorem \ref{gallottheo}.
\end{proof}

\subsection{A generalisation of Gallot's Theorem}

Let $(\hm,\hg_\epsilon)$ be a time-like or space-like cone over a  semi-Riemannian $(M,g)$. 
From now on we restrict to the case when $\mathbb{V}=\rr\cdot V$, where $V$ is a parallel vector field, normalised such that   \[\hg(V,V)=\nu\in \{ -1,0,1\}.\] 
Since $V$ is assumed to be parallel, the leaves of $\rr \cdot V$ are flat, so in order to generalise Gallot's Theorem  we would need to show that the leaves of $ V^\perp$ are also flat. In order  show this using Gallot's method, we need that the set 
\[C=\{p\in \hm\mid \del_r|_p\in \mathbb{R}V|_p\}\] is not empty.
This however can {\em only} be the case  when $\del_r$ and $V$ have the same causal character, i.e., only when $\epsilon=\nu$, i.e.,
\[
C\not=\emptyset\text{ implies }\epsilon=\nu.\]  
We have already seen Examples \ref{horosphere} and \ref{ex1}, which show that 
Gallot's Theorem does not generalise when this condition is not satisfied, i.e., when 
$\epsilon\not=\nu$. We will deal with this case in the next section. 
Here we consider the case when $\nu=\epsilon$.
In this special case we obtain  a generalisation of Gallots Theorem as a  stronger version of Theorem \ref{tmain4}.

\begin{theorem}\label{Veps}
Let $(M,g)$ be a complete semi-Riemannian manifold and let $(\hm,\hg_\epsilon) $ be the cone over $(M,g)$. If $(\hm,\hg)$ admits a parallel vector field $V$ with $\hg(V,V)=\epsilon$, then the cone is flat and $(M,g)$ is of constant curvature $\epsilon$.
\end{theorem}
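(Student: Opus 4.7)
The strategy is to adapt Gallot's proof of Theorem \ref{gallottheo} to the case where the invariant distribution is spanned by $V$. Since $\hg(V,V) = \epsilon \neq 0$, the distribution $\mathbb{V} := \mathbb{R}\cdot V$ is one-dimensional and non-degenerate, and we obtain an orthogonal splitting $T\hm = \mathbb{V} \oplus \mathbb{V}^\perp$ into parallel, non-degenerate distributions. Set
\[ C := \{ p \in \hm : \partial_r|_p \in \mathbb{V}|_p \}, \qquad C_\perp := \{ q \in \hm : \partial_r|_q \in \mathbb{V}^\perp|_q \}. \]
Both are level sets of the smooth function $\hg(\partial_r, V)$ (at values $\pm 1$ and $0$ respectively), so $\hm_0 := \hm \setminus (C \cup C_\perp)$ is open and dense. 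By Lemma \ref{gallotlem1}, the $\mathbb{V}^\perp$-leaves through points of $C$ are flat. The plan is to show that every $x \in \hm_0$ lies on a $\mathbb{V}^\perp$-geodesic emanating from some point of $C$; flatness of $\hR$ at $x$ will then follow from Lemma \ref{gallotlem1}, and I will extend this to all of $\hm$ by continuity before invoking the cone curvature formula.

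For the construction, fix $x = (r,p) \in \hm_0$ and decompose $\partial_r|_x = V_0 + W$ orthogonally, with $V_0 = \epsilon\hg(\partial_r|_x, V|_x)V|_x \in \mathbb{V}|_x$ and $W \in \mathbb{V}^\perp|_x$; since $x \notin C$ we have $W \neq 0$. Consider the geodesic $\hat\gamma$ with $\hat\gamma(0) = x$ and $\hat\gamma'(0) = -rW \in \mathbb{V}^\perp|_x$. Exactly as in the proof of Lemma \ref{gallotlem1}, the vector field $F(t) := \rho(t)\partial_r - t\hat\gamma'(t)$ is parallel along $\hat\gamma$, with initial value $F(0) = rV_0 + rW$. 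Because $V$ is globally parallel, the parallel transport of $rV_0$ along $\hat\gamma$ is $r\epsilon\hg(\partial_r|_x, V|_x)V|_{\hat\gamma(t)}$; because $\hat\gamma$ is a geodesic, the parallel transport of $rW = -\hat\gamma'(0)$ is $-\hat\gamma'(t)$. Since $\mathbb{V}$ and $\mathbb{V}^\perp$ are parallel, these are precisely the $\mathbb{V}$- and $\mathbb{V}^\perp$-components of $F(t)$, and comparing with the definition of $F$ at $t = 1$ yields
\[ \rho(1)\partial_r|_{\hat\gamma(1)} = r\epsilon\hg(\partial_r|_x, V|_x)V|_{\hat\gamma(1)} \in \mathbb{V}|_{\hat\gamma(1)}, \]
so $\hat\gamma(1) \in C$, provided the geodesic is defined up to $t = 1$.

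The main technical point is to verify this survival time via Proposition \ref{geolem}. Setting $\alpha := \hg(\partial_r|_x, V|_x)^2$ and writing $\hat\gamma'(0) = a\partial_r + X$ with $X \in T_pM$ and $g(X,X) = cL^2$, a direct calculation from $\hg(V_0, V_0) = \epsilon\alpha$ and $\hg(\partial_r,\partial_r) = \epsilon$ gives
\[ a = -r(1-\alpha), \qquad g(X,X) = \epsilon(1-\alpha)\alpha. \]
Since $x \notin C_\perp$ we have $\alpha > 0$, and three subcases arise: if $0 < \alpha < 1$ then $c\epsilon = 1$ and $a < 0$, so Proposition \ref{geolem} gives $T = -r/a = 1/(1-\alpha) > 1$; if $\alpha = 1$ (so $W$ is a nonzero null vector) then $c = 0$ and $a = 0$, giving $T = \infty$; and if $\alpha > 1$ then $c\epsilon = -1$ with $0 < a < Lr$, and a short computation yields $T = r/(Lr - a) = 1 + \sqrt{\alpha/(\alpha-1)} > 1$. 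Thus in every case $\hat\gamma(1) \in C$ is well-defined.

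Applying Lemma \ref{gallotlem1} at $p := \hat\gamma(1)$ now shows that a neighborhood of $x$ in the $\mathbb{V}^\perp$-leaf is flat; since the leaf is totally geodesic, the Gauss equation yields $\hR(Y_1,Y_2)Y_3 = 0$ for all $Y_1, Y_2, Y_3 \in \mathbb{V}^\perp|_x$. The parallelism of $V$ gives $\hR(\cdot, \cdot)V = 0$, and the symmetry $\hg(\hR(A,B)C,D) = \hg(\hR(C,D)A,B)$ of the Riemann tensor upgrades this to $\hR(V, \cdot)\cdot = 0$. Expanding arbitrary tangent vectors via $T\hm = \mathbb{V} \oplus \mathbb{V}^\perp$, every term involving $V$ vanishes and only $\hR(W_1, W_2)W_3 = 0$ survives, so $\hR|_x = 0$ for every $x \in \hm_0$. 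By density and continuity, $\hR \equiv 0$ on $\hm$, and the cone curvature formula \re{lem2} then forces $R(Y_1,Y_2)Y_3 = \epsilon(g(Y_2,Y_3)Y_1 - g(Y_1,Y_3)Y_2)$, i.e., $(M,g)$ has constant curvature $\epsilon$.
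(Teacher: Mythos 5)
Your proposal is correct and follows essentially the same route as the paper's own proof: the Gallot-style argument with the sets $C$ and $C_\perp$, the geodesic with initial velocity $-rW$ shown via Proposition \ref{geolem} to survive past $t=1$ and to land in $C$, then Lemma \ref{gallotlem1} together with density of $\hm\setminus(C\cup C_\perp)$. If anything you are slightly more careful than the paper's write-up, which removes only $C$ and thereby glosses over the points where $\hg(\partial_r,V)=0$ (there $c=0$ but $w=\epsilon\neq 0$, and the corresponding radial geodesic reaches the apex exactly at $t=1$); your explicit exclusion of $C_\perp$, which is harmless since it has empty interior, handles this correctly.
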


\bprf
Let $V$ be the parallel vector field on $(\hm,\hg_\epsilon)$ with $\hg (V,V)=\epsilon$. 
As in the proof of Theorem \ref{gallottheo} we consider the set $C=\{ p\in \hm\mid \del_r|_p =  \rr\cdot V|_p\}$ and show that each $q\in \hm\setminus C$ admits a flat neighbourhood. Let $\del_r|_q= \alpha V+W$ with $W\in V^\perp$ and, since $\hg(V,V)=\epsilon$, with $\alpha=\epsilon \hg(V,\del_r)$.
Again we have
\[\hg(\del_r,W)=\hg(W,W)=w\not=0 ,\]
and 
\[\alpha^2\epsilon = \hg(\del_r-W,\del_r-W)=\epsilon-\hg(W,W)=\epsilon - w.\]
Hence we obtain 
\begin{equation}
\label{crucial1}
0<\alpha^2=1 -\epsilon w.\end{equation}
On the other hand we write 
\[W=\epsilon \hg(\del_r,W)\del_r +W_0= \epsilon w \del_r+W_0,\]
with a $W_0\in T_qM$. Hence,
\[\hg(W,W)= \hg(W,W)^2\epsilon+\hg (W_0,W_0),\]
and hence 
\begin{equation}
\hg (W_0,W_0)=w(1-\epsilon w).
\end{equation}
Now let $\hat\gamma$ be a geodesic starting at $q$ with $r(q)=r$ and with $\hat\gamma^\prime(0)=-rW$. We will show that $\hat\gamma$ is defined on $[0,1]$.  We have $\hat\gamma^\prime(0)=a\del_r- rW_0$ with (as in Lemma \ref{geolem})
\[a=-\epsilon w r, \qquad c L^2= r^2g(W_0,W_0)=\hg(W_0,W_0)=    w(1-\epsilon w),\]
with $c=\pm 1$.  We now consider the  cases $c\epsilon=1$, $c\epsilon=-1$ and $c=0$.

If $c\epsilon =1$, then 
\[0<L^2 =\epsilon w (1-\epsilon w),
\]which, together with \ref{crucial1} implies that $\epsilon w>0$ and $a=-\epsilon w r<0$. By Lemma \ref{geolem}, $\hat \gamma$ is defined for $t<T$ with 
\[T=-\frac{r}{a}=\frac{1}{\epsilon w} >1,\]
because of (\ref{crucial1}).

If $c\epsilon=-1$ we get 
\[0<L^2=-\epsilon w(1-\epsilon w),\]
and hence that
$a=-\epsilon w r>0$ and moreover 
\[ r^2L^2 =a(a+r)=a^2+ar  >a^2.\]
Hence, we are in the case $a<rL$ in Lemma \ref{geolem}, and $\hat\gamma$ is defined for $t<T$ with 
with  $T= \frac{r}{Lr-a}$. We show now that $T>1$. For this note that by the previous displayed equation we have
\[L^2r^2-(r+a)^2=a^2 +ra -(r+a)^2 =-r(r+a)<0
\]
since $a>0$. This shows that $Lr<r+a$ and therefore $T= \frac{r}{Lr-a}>1$.

Finally, in the case $c=0$ we must have $w=0$ and hence $a=0$, so $\hat \gamma$ is defined on $[0,\infty)$.

Now we proceed in the proof of Theorem \ref{gallottheo}: the vector field $F(t)$ along $\hat\gamma$ satisfies $F(0)+\hat\gamma(0) = r\alpha V|_q$  
whose parallel transport is given by $F(1)-\hat\gamma(1)=r(\hat\gamma(1))\del_r|_{\hat\gamma(1)}$. This implies that $\hat\gamma(1)\in C$ and by Lemma \ref{gallotlem1} the leaf of $V^\perp$ though $q$ is flat. Since $V$ is a parallel vector field, this implies that $q$ has a flat neighbourhood and hence, since $\hm\setminus C$ is dense, that $(\hm,\hg)$ is flat. 
\eprf

\subsection{Non flat cones with parallel vector field}
Recall the two Examples \ref{horosphere} and \ref{ex1}. 
We will now show that cone with parallel vector fields satisfying the condition $\hg(V,V)\not=\epsilon$ are always of the  form as in these examples and thus obtain a 
stronger version of Theorem~\ref{tmain4} in the case of a parallel vector field on the cone.

First we define the function $u=
\hg (V,\del_r)$ and observe:

\begin{lemma}\label{Vlem}
Let $V$ be a parallel  vector field on the cone $(\hm,\hg_\epsilon)$ over a (not necessarily complete) semi-Riemannian  manifold $(M,g)$. 
Then $u =\hg(V,\del_r)$ is a smooth function on $M$, $u \in C^\infty(M)$,   that satisfies
\begin{equation}
\label{vus}
V=\epsilon u\del_r+\frac{1}{r}\nabla u,\end{equation}
where $\nabla u$ is the gradient of $u$ with respect to $g$, that satisfies
\begin{equation}
\label{nabu}
\nabla \d u=-\epsilon u g.
\end{equation}
\end{lemma}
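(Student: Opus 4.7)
The plan is to exploit the parallelism of $V$ together with the cone connection formulas \re{lem1}. First I would verify that $u:=\hg(V,\del_r)$ descends from $\hm$ to a smooth function on $M$: since $V$ is parallel and $\hnab_{\del_r}\del_r=0$, metric compatibility gives $\del_r u=\hg(\hnab_{\del_r}V,\del_r)+\hg(V,\hnab_{\del_r}\del_r)=0$, so $u$ is radially constant.

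Next I would decompose $V|_{(r,p)}=a(r,p)\del_r+W(r,p)$ with $W(r,p)\in T_pM$. Pairing with $\del_r$ gives $a\epsilon=u$, hence $a=\epsilon u$. To identify $W$, I test against the basic horizontal lift $X$ of a vector field on $M$ (so that $X(r)=0$ and $[X,\del_r]=0$), and use parallelism of $V$ together with $\hnab_X\del_r=\tfrac{1}{r}X$ to obtain
\[
X(u)\;=\;\hg\bigl(V,\tfrac{1}{r}X\bigr)\;=\;\tfrac{1}{r}\hg(W,X)\;=\;r\,g(W,X),
\]
where the last equality uses $\hg|_{TM\otimes TM}=r^2 g$. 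Comparing with $X(u)=g(X,\nabla u)$ forces $W=\tfrac{1}{r}\nabla u$, which is \re{vus}.

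Finally, for \re{nabu} I would substitute the expression just obtained into $\hnab_X V=0$ for a basic horizontal lift $X$ and expand with \re{lem1}, using $X(1/r)=0$ and $X(u)=g(X,\nabla u)$. The $\del_r$-contributions coming from $\hnab_X(\epsilon u\del_r)$ and from the radial piece of $\hnab_X(\tfrac{1}{r}\nabla u)$ cancel against each other, leaving the purely tangential equation
\[
\nabla_X\nabla u\;=\;-\epsilon u\,X,
\]
which is equivalent to \re{nabu} since $(\nabla\,\d u)(X,Y)=g(\nabla_X\nabla u,Y)$; the remaining parallelism condition $\hnab_{\del_r}V=0$ is automatic from $\del_r u=0$. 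No serious obstacle is anticipated: the argument is a mechanical unpacking of what it means for $V$ to be parallel on the cone. The main bookkeeping subtlety is handling the explicit factor $1/r$ in the tangential part of $V$ when applying the cone formula for $\hnab_X Y$ to $Y=\tfrac{1}{r}\nabla u$, which differs from a basic lift by a function of $r$; this factor is precisely what makes the $\del_r$-components in $\hnab_X V$ cancel.
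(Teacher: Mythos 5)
Your proposal is correct and takes essentially the same route as the paper: both decompose $V$ into a radial and a tangential part and mechanically unpack the condition $\hnab V=0$ using the cone connection formulas, obtaining $\del_r u=0$, the identification of the tangential part as $\frac{1}{r}\nabla u$, and the Hessian equation $\nabla\d u=-\epsilon u\, g$. The only cosmetic difference is that you pin down the $r$-dependence of the tangential part via metric compatibility applied to $X(u)$, whereas the paper reads it off from the radial equation $\hnab_{\del_r}V=0$ (which in your argument appears only as the final consistency check).
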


\bprf With $u =\hg(V,\del_r)$, 
we  split $V$ as $V=-u\del_r+W$ where $W$ is a section of $TM\to \hm$. Since $V$ is parallel, we use (ref{lem1}) to get $0=\hat{\nabla}_{\del_r}V$ which implies that 
$\del_r(u)=0$ and $[\del_r,W]+\frac{1}{r}W=0$. The latter implies that $W=\frac{1}{r}U$ with $U\in \Gamma(TM)$ is a vector field on $M$.
The equation $\nabla V|_{TM}=0$ implies that $\nabla u= U$, where $\nabla u$ denotes the gradient of $u$ with respect to $g$,  and $\nabla \nabla u= -\epsilon u \mathrm{Id}$, i.e., that $\nabla  \d u = \epsilon u g$.
\eprf

Recall that in the  case when  $\nu=\hg (V,V)=0$ or $\nu=-\epsilon$ we have that \[C=\{p\in \hm \mid \del_r|p\in \rr\cdot V|_p\}=\emptyset\] and also that the set of critical points of 
 $u $ is empty, 
\begin{equation}\label{crit}
C_0=\{p\in M\mid \nabla u|_p=0\} =
\emptyset.
\end{equation}
Moreover we have
\begin{equation}
\label{gvv}
g(\nabla u,\nabla u)=
\left\{
\begin{array}{ll}
-\epsilon u^2,&\text{ if }\nu=0,
\\
-\epsilon (1+u^2),&\text{ if }\nu=-\epsilon.
\end{array}\right.
\end{equation}
Then we can show:
\btheo\label{-pvf-theo}
Let $(M,g)$ be a complete semi-Riemannian manifold and $(\hm,\hg_\epsilon )$ be the  cone over $(M,g)$. If  $(\hm,\hg)$ admits a 
parallel  vector field $V$ with $\hg(V,V)=-\epsilon$, then $(M,g)$ is globally isometric to 
\[
(\rr\times N, -\epsilon \d s^2+\cosh^2(s) g_N),
\]
where $(N,g_N)$ is a complete semi-Riemannian manifold.
\etheo

\bprf
The idea is to rescale the gradient $\nabla u$ in a way that the rescaled vector field is a geodesic gradient vector field. To this end consider the function 
$s=-\epsilon \arcsinh\circ u$ on $M$, i.e., $u(p)=\sinh (-\epsilon s(p))$, for which we write $u=\sinh (-\epsilon s)  $. Then we have 
\[\nabla u|_p= -\epsilon \cosh( s(p))\nabla s|_p ,\]
and hence 
\[g(\nabla u, \nabla u)= \cosh^2(s) g (\nabla s,\nabla s)  = (1+\sinh^2(s)) g (\nabla s,\nabla s)= (1+u^2) g( \nabla s,\nabla s).\]
Hence, from (\ref{gvv}) we get $g(\nabla s,\nabla s)=-\epsilon$, so $S=\nabla s $ is a unit gradient vector field. Moreover, 
 from (\ref{nabu}) we get 
\[-\epsilon \sinh(-\epsilon s)X=
\nabla_X\nabla u=
%S(\cosh(s)) S+\cosh (s)\nabla_SS=
 \sinh(s) g(X,S) S  -\epsilon \cosh (s)\nabla_XS,
\]
and hence
\begin{equation}\label{nabs}
\nabla_XS = \tanh(-\epsilon s)\left(  X + \epsilon g(X,S)S\right).
\end{equation}
This implies that $S$ 
is a geodesic vector field.
Since $(M,g)$ is assumed to be complete, the flow  $\phi $ of $S$ is defined on $\rr\times M$. By the above observation (\ref{crit}) we have  $\nabla u\not=0$    and  hence  all level sets are smooth hypersurfaces. 
Moreover the Lie derivative of $\d s$ in direction of $S$ vanishes,
 \[\cal L_S\d s(X)= \d^2s(X)+X(\d s(S))=X(g(S,S))= 0.\]
 This implies that the flow of $S$ maps each level set of $s$ to a level set of $s$.
 
 For a fixed $p\in M$ we define the function $\sigma(t)=s(\phi_t(p)$. Since $S=\nabla s$ is complete, $\sigma$ is defined on $\rr$ and satisfies the differential equation
 \[\sigma^\prime(t)= \d s|_{\phi_t(p)}(S)=g_{\phi_t(p)}(S,S)\equiv -\epsilon.\]
 Hence $\sigma (t) = -\epsilon t+s(p)$, which shows that
 \begin{equation}\label{level}
 \phi_t(N_c)=N_{-\epsilon t +c},
 \end{equation}
 where $N_c=s^{-1}(c)$ denotes the level set of $s$,  
 Now set
 $N=N_0=\{u=0\}=\{s=0\}$, which is a smooth hypersurface and denote by $g_N$ the restriction of $g$ to $N$. We define a smooth map
 \[
 \Phi:\rr\times N\ni (t,p)\mapsto \phi_t(p)\in M.
 \]
 which, because of (\ref{level}), has the inverse
 \[
 \Phi^{-1}(q)=\left(s(q),\phi_{-s(q)}(q)\right)\in \rr\times N.\]
 This shows that $\phi$ is a diffeomorphism. 
 
 Finally, equation (\ref{nabs}) implies that 
 \[\cal L_Sg(X,Y)=2\tanh(-\epsilon s) g(X,Y),\]
 for all $X,Y\in S^\perp$, i.e., al $X, Y$ tangent to the level sets of $s$. 
 This shows that 
 \[\Phi^*g =-\epsilon \d s^2+\left( \cosh(s)\right)^{2} g_N.\]
 Since $(M,g)$ was assumed to be complete $(N,g_N)$ has to be complete by Proposition \ref{completelem}.\eprf

Now, let $(M,g)$ be a  semi-Riemannian manifold and $(\hm,\hg_\epsilon )$  be the  cone over $(M,g)$. We 
 consider the case that $V$ is a parallel light-like vector field. Recall that in this case we have, in addition to Lemma \ref{Vlem}, that $g(\nabla u,\nabla u)=-\epsilon u^2$. 
In this situation we observe:
\begin{lemma}\label{0geolem}
If $\gamma:I\to M$ is a  geodesic  on $(M,g)$ with $g(\gamma^\prime(0),\gamma^\prime (0))=-\epsilon$  and $f=u\circ \gamma$, then $f^{\prime\prime}=f$, i.e.,
\begin{equation}\label{fgeo}
f(t)=
u(\gamma(0)) \cosh(t)+  g(\nabla u|_{\gamma(0)},\gamma^\prime(0))\sinh(t).
%
%\frac{1}{2}\left( A_+\e^t +A_-\e^{-t}\right),\quad\text{ 
%with $A_\pm =u(\gamma(0))\pm g(\nabla u,\gamma^\prime(0))$.}
\end{equation}
In particular, if $(M,g) $ is complete, then the image of $u$ contains $(0,\infty) $ if $\{u>0\}\not=\emptyset$ and  $(-\infty, 0)$ if $\{u<0\}\not=\emptyset$.
\end{lemma}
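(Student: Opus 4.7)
The plan is to convert the geometric statement into a single linear ODE for the function $f=u\circ\gamma$, using the Hessian identity for $u$ that was derived in Lemma \ref{Vlem}. First I would compute the first two derivatives of $f$ along the geodesic: differentiating once gives $f'(t)=\d u(\gamma'(t))=g(\nabla u,\gamma'(t))$, and differentiating a second time yields $f''(t)=g(\nabla_{\gamma'(t)}\nabla u,\gamma'(t))+g(\nabla u,\nabla_{\gamma'(t)}\gamma'(t))$. Since $\gamma$ is a geodesic the last term vanishes, so $f''(t)=\hess(u)(\gamma'(t),\gamma'(t))$.

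Next I would invoke the identity $\nabla \d u=-\epsilon u g$ from Lemma \ref{Vlem}, which rewrites $f''(t)=-\epsilon u(\gamma(t))\, g(\gamma'(t),\gamma'(t))$. Because a geodesic has constant speed and by hypothesis $g(\gamma'(0),\gamma'(0))=-\epsilon$, we have $g(\gamma'(t),\gamma'(t))\equiv -\epsilon$ for all $t$, so $f''(t)=(-\epsilon)(-\epsilon)u(\gamma(t))=f(t)$. Hence $f$ satisfies the linear ODE $f''=f$; matching the initial data $f(0)=u(\gamma(0))$ and $f'(0)=g(\nabla u|_{\gamma(0)},\gamma'(0))$ with the general solution $A\cosh(t)+B\sinh(t)$ gives exactly the displayed formula (\ref{fgeo}).

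For the surjectivity part, suppose $u(p)>0$ for some $p\in M$. The key observation is that by the identity $g(\nabla u,\nabla u)=-\epsilon u^2$ recorded in (\ref{gvv}), the vector $X:=\nabla u|_p/u(p)$ satisfies $g(X,X)=-\epsilon$, so it is admissible as an initial velocity in the lemma. By completeness of $(M,g)$ the two geodesics $\gamma_\pm$ with $\gamma_\pm(0)=p$ and $\gamma_\pm'(0)=\pm X$ are defined on all of $\rr$. Applying (\ref{fgeo}) along each, and noting that $g(\nabla u|_p,\pm X)=\mp\epsilon u(p)$, we obtain
\[
f_\pm(t)=u(p)\bigl(\cosh(t)\mp\epsilon\sinh(t)\bigr)=u(p)\,\e^{\mp\epsilon t}.
\]
Letting $t$ range over $[0,\infty)$, one of $f_+,f_-$ sweeps out $(0,u(p)]$ while the other sweeps out $[u(p),\infty)$, and together they cover $(0,\infty)$. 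The argument for $u(p)<0$ is identical (or follows by symmetry $u\mapsto -u$), establishing that the image of $u$ contains $(-\infty,0)$ whenever $\{u<0\}\ne\emptyset$.

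The proof is essentially mechanical once one has the Hessian identity $\nabla \d u=-\epsilon ug$; the only point that requires care is the second part, where one must verify that the normalised gradient $\nabla u/|u|$ has square norm $-\epsilon$ so that it feeds back into the first part of the lemma, and that both sign choices of the initial velocity are needed to recover the full half-line.
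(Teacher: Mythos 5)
Your proof is correct and follows essentially the same route as the paper: both derive $f''=f$ from the Hessian identity $\nabla \d u=-\epsilon u\, g$ of Lemma \ref{Vlem} together with the constant speed of geodesics, and both use the normalised gradient $\nabla u|_p/u(p)$ (which has square norm $-\epsilon$ by (\ref{gvv})) as initial velocity of a complete geodesic to sweep out the half-line. The only cosmetic difference is that the paper uses a single geodesic defined on all of $\rr$, giving $f(t)=u(p)\,\e^{-\epsilon t}$ for $t\in\rr$, whereas you run two geodesics $\gamma_\pm$ with opposite initial velocities over $[0,\infty)$; since $\gamma_-(t)=\gamma_+(-t)$, these are the same argument.
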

\bprf
With $f=u\circ \gamma$ we have 
$f^\prime = g|_\gamma (\nabla u|_\gamma,\gamma^\prime)$ and hence  by Lemma \ref{Vlem},
\[f^{\prime\prime}
=
 g|_\gamma(\nabla_{\gamma^\prime}\nabla u,\gamma^\prime )=-\epsilon f  g|_\gamma(\gamma^\prime, \gamma^\prime)= f.
 \]
 The general solution to this equation is given by (\ref{fgeo}). If $(M,g)$ is complete, the maximal geodesics through a point with $u(p)\not=0$ are defined on $\rr$ and hence, by choosing a geodesic with $\gamma^\prime(0)=\frac{1}{u(p)} \nabla u|_p$, i.e., with $g(\nabla u,\gamma^\prime(0))= -\epsilon u(p)$, we get
 \begin{equation}\label{geo+}
 f(t) = \frac{u(p)}{2}\left( (1-\epsilon)\e^t+(1+\epsilon)\e^{-t}\right)=u(p)\e^{-\epsilon t}.\end{equation}
This implies the statement about  the  image of $u$.
\eprf

\btheo\label{0pvf-theo}
Let $(M,g)$ be a complete semi-Riemannian manifold and $(\hm,\hg_\epsilon )$ be the  cone over $(M,g)$. If  $(\hm,\hg)$ admits a 
parallel  light-like vector field $V$, then $M$ is a disjoint union $M=M_-\cup M_0 \cup M_+$ 
with $M_\pm$ open and such that $M_0$ is either empty (in which case one of $M_\pm$ is also empty) or a smooth totally geodesic hypersurface and $(M_\pm,g)$ is globally isometric to
\[
(\rr\times N_{\pm} , -\epsilon \d s^2+\e^{ 2 s} g_{N_{\pm}}),
\]
where $(N_{\pm} ,g_{N_{\pm}})$ are complete semi-Riemannian manifolds. 
Moreover, $M_0=\emptyset$ if and only if $(M,g)$ is Riemannian.
\etheo

\bprf Recall that for in the case of $V$ being light-like we have that $g(\nabla u,\nabla u)= -\epsilon u^2$. The proof is  analogous to the previous proof, with a difficulty arising from the possibility that the set 
\[M_0=\{p\in M\mid g(\nabla u|_p,\nabla u|_p)=0\} = \{p\in M\mid u(p)=0\} \]
may be non empty, so that the geodesic gradient vector field  $S$ from the previous proof may not be defined on all of $M$. However,  since $V$ is light-like, we have  $\nabla u\not=0$, and so $M_0$   is either empty or  a smooth hypersurface. In fact, if $M_0\not=\emptyset$, it is totally geodesic: if $X\in TM_0=\nabla u^\perp|_{M_0} $, then Lemma \ref{0geolem} shows that $f(t)\equiv 0$, so the geodesics  starting in direction of $M_0$ remain in $M_0$.

We set $M_\pm=\{\pm u>0\}$.  Without loss of generality, we assume that $M_+\not=\emptyset$, in which case we get that $N_+=\{u=1\}\not=\emptyset$ by the previous lemma.

We  consider the function 
$s=- \epsilon \ln \circ (\pm u)$ on $M_\pm $, i.e., $u=\pm \e^{-\epsilon s}$. Then we have 
\[\nabla u= \mp \epsilon \e^{-\epsilon s} \,\nabla s ,\]
and hence, for $S=\nabla s$, 
\[g(\nabla u, \nabla u)= \e^{- 2\epsilon s} g (S,S)  = u^2 g(S,S),\]
and so $g(\nabla s,\nabla s)=-\epsilon$ by (\ref{gvv}). Next we get from (\ref{nabu}) that 
\[\mp \epsilon \e^{- \epsilon s}X=
\nabla_X\nabla u=
%S(\cosh(s)) S+\cosh (s)\nabla_SS=
\mp \epsilon  \e^{-\epsilon s}  \left( - \epsilon 
g(X,S) S  +  \nabla_XS\right),
\]
and hence
\begin{equation}\label{nabse}
\nabla_XS =  \left(  X + \epsilon g(X,S)S\right).
\end{equation}
Again, this shows that $S$ 
is a geodesic vector field on $M_\pm $. 
Equation \ref{geo+} in the proof of Lemma \ref{0geolem} then shows that the geodesics with initial speed given by $S|_p$ for $p\in  M_\pm $ remain in $M_\pm$ for all $t\in  \rr$. Hence $S$ is a complete vector field on $M_\pm$ with its flow defined on $\rr\times M_\pm$, so we can continue with the proof as for the previous theorem yielding 
  a diffeomorphism 
 \[
 \Phi_\pm:\rr\times N_{\pm} \ni (t,p)\mapsto \phi_t(p)\in M,
 \]
 where $N_{\pm} =\{p\in M\mid u(p)=\pm 1\}=\{p\in M_\pm\mid s(p)=0\}$, with the inverse 
 \[
 \Phi^{-1}_\pm(q)=\left(s(q),\phi_{-s(q)}(q)\right)\in \rr\times N_{\pm}.\]

Now equation (\ref{nabse}) implies that 
 \[\cal L_Sg(X,Y)= 2 g(X,Y),\]
 for all $X,Y\in S^\perp$, i.e., al $X, Y$ tangent to the level sets of $s$. 
 This shows that 
 \[\Phi^*_\pm g =-\epsilon \d s^2+\e^{ 2 s} g_{N_\pm}\]
with a semi-Riemannian manifold  $(N_{\pm},g_{N_\pm})$. In order to conclude that $(N_{\pm},g_{N_\pm})$ are complete, we observe that (\ref{fgeo}) in Lemma \ref{0geolem} shows that geodesics of $(M,g)$ with initial speed tangent to $N_{\pm}$, i.e., with initial speed orthogonal to $\nabla u|_{N_\pm}$, remain in $M_{\pm}$ and hence, because $(M,g)$ is complete, are defined on $\rr$.  With this, Proposition \ref{completelem} implies that $(N_{\pm},g_{N_\pm})$ are complete.

For the last statement, first  note that if $(M,g)$ is Riemannian, then, since $\nabla u\not=0$, we get that  $M_0=\emptyset$. On the other hand assume that $M_0=\emptyset$ and without loss of generality that $M_+=M$, so that globally $(M=\rr\times N,g= -\epsilon \d s^2+\e^{ 2 s} g_{N}))$. 
By Proposition \ref{completelem}, the metric $g$ is only complete if it is definite and $g_N$ is complete.\eprf
%
%
% is connected. Indeed, if $p$ and $q$ are in two different connected components of $M_0$, then we can join them by a geodesic for which $f$ is not constant. However, from (\ref{fgeo}), since the geodesic starts on $M_0$, we have that 
%$f(t)=
%g(\nabla u|_{\gamma(0)},\gamma^\prime(0))\sinh(t)$ which has only one zero at $t=0$.
%If $(M,g)$ is not Riemannian, even though it is complete, we may not be able join $p$ with $q$ by a geodesic.
%\ebem
As a corollary we obtain a global version of Theorem \ref{localtheointro}.
\begin{corollary}\label{wavecor}
Let $(M,g)$ be a complete semi-Riemannian manifold and $(\hm,\hg_\epsilon )$ be the  cone over $(M,g)$. If  $(\hm,\hg)$ admits a 
parallel  light-like vector field $V$, then $\hm$ is a disjoint union $\hm=\hm_-\cup \hm_0 \cup \hm_+$ with 
\[\hm_{\pm}=\{p\in \hm \mid \pm \hg (V,\del_r) >0\},\quad \hm_0=\{p\in \hm \mid \hg(V,\del_r)=0\}=\rr_{>0}\times M_0\] and such that  $(\hm_\pm,\hg)$ is globally isometric to
\[
(\rr_{+}\times \rr_{\epsilon}\times N_{\pm} , \widetilde{g}=2\d u \d v +u^2g_{N_{\pm}}),
\]
where $(N_{\pm} ,g_{N_{\pm}})$ are a complete semi-Riemannian manifolds and where $\rr_\pm=\{x\in \rr\mid \pm x>0\}$. 
The isometry is given by 
\[\Psi_{\pm} : \hm\ni (r,s,p)\mapsto
(u=r\e^{ s},v=\frac{\epsilon}{2} r\e^{ -s}, p)\in (\rr_{+}\times \rr_{\epsilon}\times N_{\pm}).
\]
\end{corollary}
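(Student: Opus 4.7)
The plan is to reduce the corollary to Theorem~\ref{0pvf-theo} together with an explicit change of coordinates. Applying Theorem~\ref{0pvf-theo} to $(M,g)$, I would identify $M_\pm$ globally with $(\rr\times N_\pm,\ -\epsilon\,\d s^2 + \e^{2s}g_{N_\pm})$, where $(N_\pm,g_{N_\pm})$ is already known to be a complete semi-Riemannian manifold. Consequently $\hm_\pm := \rr_{>0}\times M_\pm$ is globally isometric to $\rr_{>0}\times \rr \times N_\pm$ equipped with the metric
\[
\epsilon\,\d r^2 + r^2\bigl(-\epsilon\,\d s^2 + \e^{2s}g_{N_\pm}\bigr)\ =\ \epsilon(\d r^2 - r^2\d s^2) + r^2\e^{2s}g_{N_\pm}.
\]

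Next I would introduce the coordinates $u = r\e^{s}$ and $v = \tfrac{\epsilon}{2}r\e^{-s}$ and carry out the routine differential computation $\d u = \e^{s}(\d r + r\,\d s)$, $\d v = \tfrac{\epsilon}{2}\e^{-s}(\d r - r\,\d s)$, which immediately gives $2\,\d u\,\d v = \epsilon(\d r^2 - r^2\d s^2)$ and $u^2 = r^2\e^{2s}$. Hence the cone metric above pulls back to the claimed form $2\,\d u\,\d v + u^2 g_{N_\pm}$.

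Then I would verify that $\Psi_\pm$ is a diffeomorphism onto $\rr_+\times\rr_\epsilon\times N_\pm$. Since $r>0$ we always have $u>0$, while $v$ has the same sign as $\epsilon$, so the image lies in the asserted set; and the inverse is read off from the relations $2\epsilon uv = r^2$ and $\e^{2s} = \epsilon u/(2v)$, both well defined on $\rr_+\times\rr_\epsilon$. To finish, I would identify $\hm_\pm$ intrinsically: by Lemma~\ref{Vlem} the function $\hg(V,\del_r)$ is independent of $r$ and so descends to a function on $M$; the proof of Theorem~\ref{0pvf-theo} shows that this function equals $\pm\e^{-\epsilon s}$ on $M_\pm$ and vanishes precisely on $M_0$. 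This gives the three descriptions $\hm_\pm = \{\pm\hg(V,\del_r)>0\}$ and $\hm_0 = \rr_{>0}\times M_0$ as in the statement.

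No step here is genuinely difficult: all of the substance is contained in Theorem~\ref{0pvf-theo}. The only point requiring care is bookkeeping the sign $\epsilon$ through the change of variables so that the image lands in $\rr_\epsilon$ rather than $\rr_{-\epsilon}$, and so that the cross-term coefficient in $2\,\d u\,\d v$ comes out as $+1$; both follow from the choice of the factor $\tfrac{\epsilon}{2}$ in the definition of $v$.
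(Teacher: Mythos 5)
Your proposal is correct and follows essentially the same route as the paper: apply Theorem~\ref{0pvf-theo} to write the cone metric over $M_\pm$ as $\epsilon(\d r^2 - r^2\d s^2) + r^2\e^{2s}g_{N_\pm}$, then check via $\d u = \e^{s}(\d r + r\,\d s)$, $\d v = \tfrac{\epsilon}{2}\e^{-s}(\d r - r\,\d s)$ that $2\,\d u\,\d v = \epsilon(\d r^2 - r^2\d s^2)$ and $u^2 = r^2\e^{2s}$. Your additional bookkeeping (verifying that $\Psi_\pm$ is a diffeomorphism onto $\rr_+\times\rr_\epsilon\times N_\pm$ and identifying $\hm_\pm$ via $\hg(V,\del_r) = \pm\e^{-\epsilon s}$) is correct and only makes explicit what the paper leaves implicit.
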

\bprf 
We have 
$u^2=r^2\e^{ 2s}$ and  
\[2 \d u \d v= \epsilon \left( \e^{ s} \d r \pm r\e^{ s}ds \right)\left(  \e^{- s} \d r\mp r\e^{- s} \d s\right)=\epsilon (\d r^2 - r^2 \d s^2). \]
Hence, by the previous theorem,  $\Psi_\pm^*\widetilde{g}= \hg$.
\eprf

\section{Lorentzian cones and applications to Killing spinors}
\label{spinsec}
\subsection{Parallel spinors and Killing spinors}
\label{spinsubsec}
Let $(M,g)$ be a semi-Riemannian spin manifold, i.e., a space and time oriented semi-Riemannian manifold with a spin structure,  and let $\Sigma $ its complex spinor bundle. This is a complex vector bundle that is equipped with the following structures:
\bnum
\item 
the Clifford multiplication 
\[ TM\otimes \Sigma \ni X\otimes \varphi\mapsto X\cdot \varphi \in \Sigma,\]
\item a  hermitian bundle metric $\la.,.\ra\in \Gamma(\Sigma^*\otimes \overline{\Sigma}^*)$ on $\Sigma$, conjugate-linear in the second component, that is positive definite if $g$ is Riemannian and of neutral signature if $g$ is indefinite, 
\item the lift $\nabla^\Sigma$ of the Levi-Civita connection to $\Sigma$,
\enum
that satisfy the following properties, where $r$ is the number of negative eigenvalues of $g$, 
\begin{equation} \begin{array}{rcl}
(X \cdot Y + Y \cdot X) \cdot \varphi &=& - 2 \,g(X,Y) \, \varphi,
\label{spin-calc-1}
\\
\langle X \cdot \varphi, \psi \rangle_\Sigma &=&  (-1)^{r+1} \langle
\varphi,
X \cdot \psi \rangle_\Sigma , \\
\nabla^\Sigma_Y(X \cdot \varphi) &=&  (\nabla_YX) \cdot \varphi + X \cdot
\nabla^\Sigma_Y \varphi , \\
X ( \langle \varphi, \psi \rangle_\Sigma ) &=& \langle \nabla^\Sigma_X \varphi
, \psi \rangle_\Sigma + \langle \varphi, \nabla^\Sigma_X \psi \rangle_\Sigma.
\end{array}
\end{equation}
The second of these relations together with $\la.,.\ra$ being Hermitian shows that 
to each spinor field $\varphi$ one can assign a (real) vector field 
$V_{\varphi}\in \Gamma(TM)$
\[ g(V_{\varphi},X) := i^{r+1}\langle \varphi, X \cdot \varphi\rangle_\Sigma \quad \mbox{ for all $X\in TM$}.\]
This vector field is sometimes called the 
 {\em Dirac current of $\varphi$}.
 The above relations also show that $\nabla V_\vf=0$ if $\vf$ is a parallel  a parallel spinor field, i.e., if $\nabla^\Sigma \vf=0$. However, $V_\vf$ be identically zero even if $\vf$ is not. This happens for example for parallel spinors on Riemannian manifolds.
 
 Moreover, the Ricci tensor of a semi-Riemannian manifold with parallel spinor satisfies $g(Ric(X),Ric(X))=0$. In particular, Riemannian manifolds with parallel spinors are Ricci-flat.

A {\em Killing spinor with Killing number $z\in \mathbb{C}$} is a spinor field $\vf\in \Gamma(\Sigma)$ that satisfies the equation
\[\nabla^\Sigma_X\vf=z\, X\cdot \vf.\] Using the above formula one can show that the scalar curvature of a semi-Riemannian manifold with a  Killing spinor is equal to $4n(n-1)z^2$. This implies that $z$ is either real or imaginary and hence the scalar curvature is a  positive or negative constant. 
A Killing spinor with Killing number $z=\pm \frac{1}{2}$ is called  {\em real Killing spinor} and with  $z=\pm \frac{\mathrm{i}}{2}$, $\vf$ an {\em imaginary Killing spinor}. 
Moreover, Riemannian manifolds with Killing spinor are Einstein, so Riemannian manifolds with real/imaginary Killing spinor provide  examples  of Einstein manifolds with positive/negative  scalar curvature. The question which Einstein manifolds (or constant scalar curvature manifolds) can be constructed in this way lead to the problem of classifying manifolds with Killing spinors. 
The fundamental observation for solving this problem is the relation to semi-Riemannian cones:
\btheo[\cite{baer93,bohle}]\label{spincone}
Let $(M,g)$ be a semi-Riemannian spin manifold that  admits a Killing spinor with Killing number  $\pm \frac{\sqrt{\epsilon}}{2}$ if and only if the semi-Riemannian cone $(\hm,\hg_\epsilon)$ admits a parallel spinor field.
\etheo

\bbem[\cite{bohle}]\label{spinwarp}
In \cite{bohle} Bohle proved a more general result:
Let $(M,g)$ be a semi-Riemannian spin manifold and $f:I\to \rr$ be a smooth function. Then the warped product metric 
\[g_{\epsilon, f}=\epsilon \d s^2+ f^2(s) g\] 
on $I\times M$  admits a Killing spinor with Killing number $\hat{\lambda}\in \{0, \pm \frac{1}{2},\pm \frac{\mathrm{i}}{2}\}$ if and only if
\bnum
\item 
The warping function satisfies the ODE $f^{\prime\prime}= -4\epsilon \hat\lambda^2 f$, and 
\item 
$(M,g)$ admits a Killing spinor with Killing number $\pm\lambda$, where $\lambda^2=\hat\lambda^2f^2+\frac{\epsilon}{4} (f^\prime)^2$.
\enum
\ebem
Theorem \ref{spincone}  together with Gallot's Theorem  \ref{gallottheo} was used by B\"ar \cite{baer93} to derive a classification of complete Riemannian manifolds with real Killing spinors: if $(M,g)$ admits a real Killing spinor, the cone admits a parallel spinor and under the assumption of completeness, by Gallot's theorem, the cone is irreducible. Then by Berger's classification of irreducible holonomy groups \cite{berger55}, Wangs classification of those admitting an invariant spinor \cite{wang89} under their spin representation, and the correspondence between holonomy groups and geometric structures, B\"ar arrived at the following classification:
\btheo[C.~B\"{a}r \cite{baer93}]\label{baertheo}
 Let $M$ be a complete, simply connected Riemannian spin manifold with a real Killing spinor. Then $M$ is isometric to round sphere, or  ta a compact Einstein space with one of the following structures: 
Sasaki,  $3$-Sasaki,
  $6$-dimensional  nearly-K\"ahler, 
   or nearly parallel $G_2$.
\etheo
 Baum gave a classification of Riemannian manifolds with imaginary Killing spinors \cite{baum89-3}. Baum's proof does not use the cone construction of Theorem \ref{spincone} explicitly. 
In other signatures the classification of semi-Riemannian manifolds with Killing spinors is  only known in special cases: for example,   Bohle and Baum classified Lorentzian manifolds with real Killing spinors \cite[Section 5]{bohle}, with an addition made in  \cite[Proposition 7.1]{baum00a}, again without using the cone construction explicitly. In the next section we will use our results from the previous section to obtain Baum's and Bohle's  classification results.

\subsection{Lorentzian cones and Killing spinors}
In this section we will use our results of Section \ref{parsec} to derive the classification of complete Riemannian manifolds with imaginary Killing spinors and of complete Lorentzian manifolds with real Killing spinors. In both cases Theorem \ref{spincone} yields a parallel spinor on a Lorentzian cone and hence a parallel Dirac current by the observations in Section \ref{spinsubsec}. In Lorentzian signature one can show that the Dirac current is a causal vector field:
\blem\label{diraclemma}
Let $\vf$ be a parallel spinor field on a spin Lorentzian manifold $(M,g)$. Then $V_\vf$ is a causal  parallel vector field, i.e, $V_\vf\not=0$, $\nabla V_\vf=0$ and $g(V_\vf,V_\vf)\le 0$.
\elem
 
\bprf
We have already seen that $V_\vf$ is parallel, so it is either identically zero or non vanishing and we have to verify its causal character. Since $(M,g)$ is time orientable  we fix a time-like unit vector field $T$ and split $V_\vf\not=0$ as 
\[V_\vf= -g(T,V_\vf)T+ g(N,V_\vf)  N,\]
where $N$ is a spacelike unit normal field orthogonal to $T$.
Then we have by (\ref{spin-calc-1}) that
\[g(V_\vf,V_\vf)= -g(T,V_\vf)^2+ g(N,V_\vf) ^2= - \la T\cdot \vf,\vf\ra^2+  \la N\cdot \vf,\vf\ra^2,\]
and we have to show that this is not positive. For this observe that the endomorphism $T\cdot N$ on $\Sigma$ squares to the identity by the defining relation for the Clifford algebra in  (\ref{spin-calc-1}),
\[ T\cdot N \cdot T\cdot N= - N\cdot T\cdot T\cdot N=  - N\cdot N =1.\] 
Hence $T\cdot N$ has eigenvalues $\pm 1$ and we can split $\vf=\vf_++\vf_-$ into its components in the corresponding eigenspaces. Note that
\[ T\cdot \vf_\pm=\pm N\cdot \vf_\pm,\]
which, together with  (\ref{spin-calc-1}), implies that
\[ \la T\cdot \vf_+,\vf_-\ra 
=
\la  \vf_+,T\cdot\vf_-\ra 
=
-\la  \vf_+, N\cdot \vf_-\ra 
=
-\la N\cdot \vf_+,  \vf_-\ra 
=
-\la T\cdot \vf_+,  \vf_-\ra,\]
so that  $\la T\cdot \vf_+,\vf_-\ra=0$.
Then we use the fact (see \cite{Baum81} for a proof) that the hermitian form $(\phi,\psi)_T=\la T\cdot \phi,\psi\ra$ on $\Sigma$  is positive definite. The last equation then shows that $(\vf_+, \vf_-)_T=0$ and we get
\[
g(V_\vf,V_\vf)= - ( \vf,\vf)^2_T+  ( T\cdot N\cdot \vf,\vf)^2_T=
-4 (\vf_+,\vf_+)_T (\vf_-,\vf_-)_T\le 0.
\]
This shows that $V_\vf$ is either time-like or light-like.
\eprf
In fact, on a Lorentzian manifold  the  Dirac current of  spinor field is always causal even if the spinor is not parallel, but it may change its causal character from light-like to time-like. The proof of this has to take into account that $V_\vf$ may have zeros so that $N$ may not be well defined.

The following theorem gives a classification of Riemannian manifolds with imaginary Killing spinors.

\btheo[\cite{baum89-3}]\label{imspin}
Let $(M,g)$ be a complete Riemannian manifold with an imaginary Killing spinor. Then $(M,g)$ is globally isometric to hyperbolic space or to a warped product 
of the form 
\begin{equation}\label{warp} \left(\rr\times N ,  \d s^2+\e^{ 2 s} g_{N}\right),\end{equation}
where $(N,g_N)$ is a complete Riemannian manifold with a parallel spinor field. 
\etheo
\bprf
Let $(M,g)$ be a complete Riemannian manifold with an imaginary Killing spinor field. Then, by Theorem \ref{spincone}, the Lorentzian cone $(\hm,g_-)$ admits a parallel spinor field $\vf$, which by Lemma \ref{diraclemma} provides us with a parallel vector field $V_\vf$ that is  either light-like or time-like. In case it is time-like, Theorem \ref{Veps} yields that $(M,g)$ has constant sectional curvature $-1$ and hence is isometric to hyperbolic space. If $V_\vf$ is light-like, we can apply Theorem \ref{0pvf-theo} to get the desired warped product in (\ref{warp}) with a complete Riemannian manifold $(N,g_N)$. To get that $(N,g_N)$ admits a parallel spinor field we can either use the result in Remark \ref{spinwarp} or recall Corollary \ref{wavecor} and Theorem \ref{holtheointro} to obtain that the holonomy algebra of $(\hm,\hg_-)$ is equal to $\hol(N,g_N)\ltimes \rr^{\dim(N)}$. This is an indecomposable holonomy algebra that admits an invariant spinor under its spin representation if and only if $\hol(N,g_N)$ admits an invariant spinor.  
\eprf

The next theorem provides a classification of Lorentzian manifolds with real Killing spinors.

\btheo[\cite{bohle,baum00a}]
Let $(M,g)$ be a complete Lorentzian  manifold with a real Killing spinor. Then 
\bnum
\item  either $(M,g)$   is globally isometric to de Sitter space or  space or to a warped product 
of the form 
\begin{equation}\label{warp1} \left(\rr\times N ,  -\d s^2+\cosh^2(s) g_{N}\right),\end{equation}
where $(N,g_N)$ is a complete Riemannian manifold with a real Killing  spinor (i.e., with one of the structures in Theorem \ref{baertheo}), or
\item 
$M$ is a disjoint union $M=M_-\cup M_0 \cup M_+$ with  $M_0$   a smooth totally geodesic hypersurface and $M_\pm$ and such that 
 $(M_\pm,g)$ are globally isometric to
\[
(\rr\times N_{\pm} , - \d s^2+\e^{ 2 s} g_{N_{\pm}}),
\]
where $(N_{\pm} ,g_{N_{\pm}})$ are  complete Riemannian manifolds with parallel spinors.
\enum
\etheo
\bprf
If $(M,g)$ admits a real Killing spinor, then the cone $(\hm,\hg_+)$ admits a parallel spinor and hence  a parallel causal vector field $V$. 

If $V$ is time-like, then we apply Theorem \ref{-pvf-theo}, to get that $(M,g)$ is isometric to the Lorentzian manifolds in  (\ref{warp1}) with a complete Riemannian manifold $(N,g_N)$.  If $(N,g_N)$ is the round metric on the sphere then $(M,g)$ is de Sitter space.
The result in Remark \ref{spinwarp} shows that $(M,g)$ admits a real Killing spinor if and only if $(N,g_N)$ does. 

If $V$ is light-like, Theorem \ref{0pvf-theo} shows that (2) holds with complete Riemannian manifolds $(N_\pm,g_{N_\pm})$. To obtain that $(N,g_N)$ admits a parallel spinor, we use again Remark \ref{spinwarp} or recall Corollary \ref{wavecor} and Theorem \ref{holtheointro}, as for the proof of Theorem \ref{imspin}.
\eprf

\bibliographystyle{abbrv}
%\bibliographystyle{amsalpha}
%
%\bibliography{GEOBIB}

\providecommand{\MR}[1]{}\def\cprime{$'$} \def\cprime{$'$} \def\cprime{$'$}

\end{document}